\newtheorem{theorem}{Theorem} 
\newtheorem{lemma}{Lemma}
\newtheorem{proposition}{Proposition} 
\newtheorem{definition}{Definition}[section]
\theoremstyle{definition}
\newtheorem{remark}{Remark}  
\newcommand{\E}{\mathbb{E}}
\newcommand{\R}{\mathbb{R}}
\renewcommand{\P}{\mathbb{P}}
\newcommand{\bs}{\boldsymbol}
\begin{document}

\title{Weighted directed networks with a differentially private bi-degree sequence}

\author{
Qiuping Wang\thanks{Corresponding author: Qiuping Wang, Department of Statistics, Central China Normal University, Wuhan, 430079, China.
Wang and Zhang contribute equally to this article. \texttt{Emails:}$^*$qp.wang@mails.ccnu.edu.cn,
$^\S$zhangxiao$\_$2019@outlook.com,
$^\ddag$ouyangyang$\_$2019@outlook.com,
$^\sharp$wangqian@mails.ccnu.edu.cn.
} 
\hspace{2mm}
Xiao Zhang$^\S$
\hspace{2mm}
Jing Luo\thanks{Corresponding author: Jing Luo, School of Mathematics and Statistics, South-Central University for Nationalities, Wuhan, 430079, China.
\texttt{Emails:} jingluo@mail.scue.edu.cn.}
\hspace{2mm}
Yang Ouyang$^\ddag$
\hspace{2mm}
Qian Wang$^\sharp$
\\~~\\
Central China Normal University$^{*,\ddag,\S,\sharp}$\\
South-Central University for Nationalities$^\dag$
}

\date{\empty}
\maketitle

\begin{abstract}

The $p_0$ model is an exponential random graph model for directed networks with the bi-degree
sequence as the exclusively sufficient statistic. It captures the network feature of degree heterogeneity.
The consistency and asymptotic normality of a differentially private estimator of
the parameter in the private $p_0$ model has been established. However, the $p_0$ model only focuses on binary edges.
In many realistic networks, edges could be weighted, taking a set of finite discrete values.
In this paper, we further show that
the moment estimators of the parameters based on the differentially private bi-degree sequence in the weighted $p_0$ model are consistent and asymptotically normal. Numerical studies demonstrate our theoretical findings.

\vskip 5pt \noindent
\textbf{Key words}:  Asymptotic normality; Bi-degree; Consistency; Differential privacy. \\

\end{abstract}
\vskip 5pt

\section{Introduction}
Networks provide a convenience way for representing relationships between a set of individuals such as friendships of
social networks [e.g., \cite{amaral2000classes}], co-authors in collaboration networks [e.g., \cite{Newman2001}] and protein--protein interactions in biological networks [e.g., \cite{Han2005Effect}].
With the rapid development of information technology, more and more network data have been collected and stored.
As results, there has been a growing interest to analyze network data in statistics.
Many statistical models have been established to reveal essence from network data.
For instance,  \cite{holland1981exponential} proposed the $p_1$ model with the dyad independent assumption for modeling the variety of degrees and reciprocity in
binary directed graph data. \cite{frank1986markov} introduced the notion
of Markov dependence for graphs, which specifies that two
possible edges are dependent whenever they share a vertex conditional on all other edges, and established the Markov random graph models, in which the counts of $k$-stars and triangles are sufficient statistics in exponential-family distributions on graphs.
A more general form $p^*$ model was later given by \cite{Wasserman1996}, which  makes an extension from the single network to the multiple networks.
\cite{wang1987stochastic} introduced the stochastic block models explaining the block structure.
The asymptotic theories in these models have also been established [e.g., \cite{chatterjee2011random}, \cite{Yan2016asymptotics}, \cite{yan2013central}, \cite{ChatterjeeDiaconis2013}].

Since network data often contains sensitive information about individuals and their relationships (e.g., sexual relationships, email exchanges, financial transactions),
the data privacy has become an important issue in network data analysis.
The demand for privacy protection has leaded to a rapid development on algorithms to release network data or aggregate network statistics safely [e.g., \cite{Lu2014}, \cite{Task2012}].
\cite{dwork2006calibrating} developed a rigorous privacy standard--differential privacy to control the privacy leakage in the randomized data releasing mechanisms.
Roughly speaking, it says that changes to an individual data do not significantly affect the output distribution.

\cite{hay2009accurate} used the Laplace mechanism, which satisfies differential privacy, to release the degree partition of undirected graphs and
proposed an effective algorithm to find the minimum $L_2$ distance between all possible graph degree partitions and noisy degree partitions.
\cite{karwa2016inference} used the discrete Laplace mechanism to release the degree sequence and proved that a differentially private estimator of the parameter in the $\beta$--model is consistent and asymptotically normally distributed by using the denoised degree sequence under the assumption that all parameters are bounded.

\cite{pan2019asymptotics} showed that the moment estimators of the parameters directly based on the differentially private degree sequence without the denoised process is consistent and asymptotical normality in the $\beta$--model.
\cite{Yan2020} used the discrete Laplace mechanism to release the bi-degree sequence of directed graphs and
proved that the different differentially private estimator of the parameter in the $p_0$--model without the denoised process is consistent and asymptotically normally distributed, which is an exponential random graph model for directed networks with the bi-degree
sequence as the exclusively sufficient statistic.


In many realistic networks, edges could be weighted, taking a set of finite discrete values.
For example, \cite{freeman1979networkers} collected several social networks, whose directed edges between academics joining in an experiment on computer
mediated communication denote the acquaintance information that were coded as the discrete
values $0,\ldots, 4$. In this paper, we also use the discrete Laplace mechanism to release the bi-degree sequence of weighted directed graphs.
Motivated by \cite{Yan2020}, we use the moment equation to infer the degree parameters in the weighted $p_0$ model, in which the unobserved original degree sequence is directly replaced by the differentially private bi-degree sequence. The  moment estimator is differentially private.
We show that the differentially private moment estimator is consistent and asymptotically normal.
Numerical studies demonstrate our theoretical findings.

The article is organized in the following way. In Section \ref{section:model}, we give a brief introduction to the weighted $p_0$ model and some preliminaries of differential privacy. In Section \ref{section:results}, we present the differential private moment estimators and establish the asymptotic properties of the private estimator.
In Section \ref{section:simulation}, we carry out the numerical simulations. Then we make some discussion in Section \ref{section:discussion} to summarise out work. The proofs of the theorems are regelated into Section \ref{section:proof}.

\section{Model and differential privacy}
\label{section:model}

\subsection{The weighted $p_0$ model}

Let $G_n$ be a directed graph on $n\geq 2$ nodes labeled by ``1, \ldots, n" with no self-loops.
Let $a_{i,j}\in \Omega$ be the weight of the directed edge from node $i$ to node $j$,
where $\Omega\subset \mathbb{R}$ is the set of all possible weight values, and let $A=(a_{i,j})$ be the adjacency matrix of the graph $G_n$.
We consider a finite discrete weight here and assume that  $\Omega=\{0,1,\ldots,q-1\}$ with $q$ a fixed constant.
Since $G_n$ is loopless, let $a_{i,i}=0$ for convenience.
Let $d_i^+= \sum_{j \neq i}^{n} a_{i,j}$ be the out-degree of node $i$
and $d^+=(d_1^+, \ldots, d_n^+)^\top$ be the out-degree sequence of the graph $G_n$.
Similarly, define $d_i^- = \sum_{j \neq i}^{n} a_{j,i}$ as the in-degree of node $i$
and $d^-=(d_1^-, \ldots, d_n^-)^\top$ as the in-degree sequence.
The pair $\{d^+, d^-\}$ is called the bi-degree sequence.

The $p_0$ model [\cite{Yan2016asymptotics}] is an exponential random graph model for directed networks with the bi-degree
sequence as the exclusively sufficient statistic.
The density or probability mass function on $G_n$ in the weighted $p_0$ model [\cite{Zhang2016Directed}] can be represented as:
\begin{equation}
\label{p0model}
\P(G_n)= \frac{1}{c(\alpha, \beta)} \exp( \sum_i \alpha_i d_i^+ + \sum_j \beta_j d_j^- ),
\end{equation}
where $c(\alpha, \beta)$ is a normalizing constant, $\alpha=(\alpha_1, \ldots, \alpha_n)^\top$
and $\beta=(\beta_1, \ldots, \beta_n)^\top$.
The outgoingness parameter $\alpha_i$ characterizes how attractive the node is and the incomingness parameter $\beta_{i}$
illustrate the extent to which the node is attracted to others as in \cite{holland1981exponential}.
The subscript ``0" means a simpler model than the $p_1$ model
that contains an additional reciprocity parameter [\cite{holland1981exponential}].
\cite{Yan2016asymptotics} established the consistency and asymptotic normality of the MLE in the $p_0$ model in the case of binary weights and continuous weights.
\cite{Zhang2016Directed} extended their work to the case of finite discrete weights and derived the parallel results.

Since an out-edge from node $i$ pointing to $j$ is the in-edge of $j$ coming from $i$, it leads to that
the sum of out-degrees is equal to the sum of in-degrees.
If one transforms $(\alpha, \beta)$ to $(\alpha-c, \beta+c)$, then the probability distribution in \eqref{p0model} does not change.
For the sake of the identification of model parameters, we set $\beta_n=0$ as in \cite{Yan2016asymptotics}.
Moreover, the weighted $p_0$ model can be formulated by an array of mutually independent Bernoulli random variables $a_{i,j}$, $1\le i\neq j\le n$ with probabilities [\cite{Zhang2016Directed}]:
\[
\P(a_{i,j}=a)=\frac{e^{a(\alpha_{i}+\beta_{j})}}{\displaystyle\sum^{q-1}_{k=0}e^{k(\alpha_{i}+\beta_{j})}},~~a=0,1,\ldots,q-1.
\]
The normalizing constant $c(\alpha, \beta)$ is equal to $\sum_{i\neq j}^{n}\log(\sum_{k=0}^{q-1}e^{k(\alpha_{i}+\beta_{j})})$.
\subsection{Differential privacy}

Consider an original database $D$ containing a set of records of $n$ individuals.
A randomized data releasing mechanism $Q$ takes $D$ as an input and outputs a sanitized database $S=(S_1, \ldots, S_k)$
for public use, where the size of $S$ could not be the same as $D$.
Specifically, the mechanism $Q(\cdot|D)$ defines a conditional probability distribution on output $S$ given $D$.
Let $\epsilon$  be a positive real number and $\mathcal{S}$ denote the sample space of $Q$.
We call two databases $D_1$ and $D_2$ are neighbor if  they differ only on a single element.
The data releasing mechanism $Q$ is \emph{$\epsilon$-differentially private}
if for any two neighboring databases $D_1$ and $D_2$,
and all measurable subsets $B$ of $\mathcal{S}$ [\cite{dwork2006calibrating}],
\[
Q(S\in B |D_1) \leq e^{\epsilon }\times Q(S\in B |D_2).
\]

The definition of $\epsilon$-differential privacy is based on ratios of probabilities.
In particular, given two databases $D_1$ and $D_2$ that are different from only a single entry, the probability of an output $S$ given the input $D_1$ in the data releasing mechanism $Q$ is less than that given the input $D_2$
multiplied by a privacy factor $e^{\epsilon}$.
The privacy parameter $\epsilon$ is chosen by the data curator
administering the privacy policy and is public, which controls the trade-off
between privacy and utility.
Smaller value of $\epsilon$ means more privacy protection.
Notably, it is almost the same distribution of the output that an individual's record whether or not appears in the
database under preserve-privacy.

The concept of differential privacy depends on the definition of the two neighboring database.
In the graph field, \emph{differential privacy} is divided into \emph{node differential privacy} [\cite{kasiviswanathan2013analyzing}, \cite{Hay2010book}] and
\emph{edge differential privacy} [\cite{nissim2007smooth}].
Two graphs are called neighbors if they differ in exactly one
edge, then \emph{differential privacy} is \emph{edge differential privacy}.
Analogously, \emph{node differential privacy}  let graphs be neighbors if one can be obtained from the other by
removing a node and its adjacent edges.
Edge differential privacy protects edges not to be detected, whereas node differential privacy protects nodes together with their
adjacent edges, which is a stronger privacy policy.
Following \cite{hay2009accurate}, we use edge differential privacy here.
Let $\delta(G, G^\prime)$ be the number of edges
on which $G$ and $G^\prime$ differ.
The formal definition of edge differential privacy is as follows.

\begin{definition}[Edge differential privacy]
Let $\epsilon>0$ be a privacy parameter. Let $G_1$ and $G_2$ be arbitrarily two neighboring graphs that differ in exactly one
edge. A randomized mechanism
$Q(\cdot |G)$ is $\epsilon$-edge differentially private if
\[
\sup_{ G, G^\prime \in \mathcal{G}, \delta(G, G^\prime)=1 } \sup_{ S\in \mathcal{S}}  \log \frac{ Q(S|G) }{ Q(S|G^\prime ) } \le \epsilon,
\]
where $\mathcal{G}$ is the set of all directed graphs of interest on $n$ nodes and
$\mathcal{S}$ is the set of all possible outputs.
\end{definition}

Edge differential privacy requires that
the logarithmic ratio of the probabilities of an output $S$ given two neighboring graphs $G_1$ and $G_2$ is up to
at most a privacy scalar $\epsilon$.
If the outputs are the network statistics, then a simple algorithm to guarantee edge differential privacy is the Laplace Mechanism [e.g., \cite{dwork2006calibrating}]
that adds the Laplace noise.
When $f(G)$ is integer, one can use a discrete Laplace random variable as the noise as in \cite{karwa2016inference}, where it has the probability mass function:
\begin{equation}
\label{equ:discrete}
\P(X=x)= \frac{1-\lambda}{1+\lambda} \lambda^{|x|},~~x \in \{0, \pm 1, \ldots\}, \lambda\in(0,1).
\end{equation}

\begin{lemma}[Lemma 1 in \cite{karwa2016inference}]
\label{lemma:DLM}
Let $f:\mathcal{G} \to \R^k$. Let $e_1, \ldots, e_k$ be independent and identically distributed discrete Laplace random variables with
the parameter $\lambda$ in \eqref{equ:discrete}.
Then the discrete Laplace mechanism outputs $f(G)+(e_1, \ldots, e_k)$ is $\epsilon$-edge differentially private, where $\epsilon= -\Delta(f)\log \lambda$
and
\[
\Delta(f)=\max_{ \delta(G, G^\prime)=1 } \| f(G) - f(G^\prime) \|_1.
\]
\end{lemma}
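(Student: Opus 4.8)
The plan is to work directly with the probability mass functions of the two mechanisms $Q(\cdot\mid G)$ and $Q(\cdot\mid G')$ for a pair of neighboring graphs, form their pointwise ratio at an arbitrary output, and bound it using the reverse triangle inequality together with the definition of $\Delta(f)$.

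First I would fix neighboring graphs $G,G'$ with $\delta(G,G')=1$ and an output point $s=(s_1,\dots,s_k)$. Since $e_1,\dots,e_k$ are independent, the mechanism has mass function $Q(s\mid G)=\prod_{i=1}^k \P(e_i=s_i-f_i(G))$, and substituting \eqref{equ:discrete} the normalizing factors $\tfrac{1-\lambda}{1+\lambda}$ cancel in the ratio, so that
\[
\frac{Q(s\mid G)}{Q(s\mid G')}=\prod_{i=1}^k \lambda^{\,|s_i-f_i(G)|-|s_i-f_i(G')|},\qquad
\log\frac{Q(s\mid G)}{Q(s\mid G')}=(\log\lambda)\sum_{i=1}^k\Bigl(|s_i-f_i(G)|-|s_i-f_i(G')|\Bigr).
\]
Here I would note that when $f$ is integer-valued, as in all our applications, the discrete Laplace noise has full support on $\Z$, so both mass functions are supported on the same lattice $\Z^k$; hence $Q(\cdot\mid G)$ and $Q(\cdot\mid G')$ are mutually absolutely continuous and the ratio above is always well defined and strictly positive.

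Next, by the reverse triangle inequality applied coordinatewise, $\bigl|\,|s_i-f_i(G)|-|s_i-f_i(G')|\,\bigr|\le |f_i(G)-f_i(G')|$, whence
\[
\Bigl|\sum_{i=1}^k\bigl(|s_i-f_i(G)|-|s_i-f_i(G')|\bigr)\Bigr|\le \sum_{i=1}^k|f_i(G)-f_i(G')|=\|f(G)-f(G')\|_1\le \Delta(f).
\]
Since $\lambda\in(0,1)$ we have $\log\lambda<0$, so $\log\frac{Q(s\mid G)}{Q(s\mid G')}\le |\log\lambda|\cdot\Delta(f)=-\Delta(f)\log\lambda=\epsilon$. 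This bound is uniform in $s$ and in the choice of neighboring $G,G'$, so taking suprema gives $\sup_{\delta(G,G')=1}\sup_{S}\log\frac{Q(S\mid G)}{Q(S\mid G')}\le\epsilon$, which is exactly $\epsilon$-edge differential privacy; the statement for a general measurable set $B$ follows by summing the pointwise inequality over $B$. There is essentially no obstacle in this argument — the only points requiring care are keeping track of the sign of $\log\lambda$ (so that $\epsilon=-\Delta(f)\log\lambda>0$) and observing that the full support of the discrete Laplace distribution keeps the two output distributions from ever having disjoint supports, so the likelihood ratio never degenerates.
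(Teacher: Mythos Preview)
Your argument is correct and is the standard proof of this fact. Note, however, that the paper does not actually prove this lemma: it is quoted verbatim as ``Lemma 1 in \cite{karwa2016inference}'' and used as a black box, so there is no proof in the paper to compare against. Your write-up supplies exactly the missing verification --- forming the likelihood ratio from the product of discrete Laplace mass functions, cancelling the normalizing constants, applying the reverse triangle inequality coordinatewise to bound by $\|f(G)-f(G')\|_1\le\Delta(f)$, and tracking the sign of $\log\lambda$ --- and would be an appropriate inclusion if a self-contained treatment were desired.
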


One nice property of differential privacy is that any function of a differentially
private mechanism is also differentially private [\cite{dwork2006calibrating}].
That is, if $f$ is an
output of an $\epsilon$-differentially private mechanism, then
$g(f(G))$ is also $\epsilon$-differentially private, where $g$ is any function.
Therefore, any post-processing done on the differentially private bi-degree sequence  is also
differentially private.

\section{Main Results}
\label{section:results}

\subsection{A differentially private bi-degree sequence}

One common approach to provide privacy protection is using the Laplace mechanism, in which
independently and identically distributed Laplace random variables are added into the original data.
We use the discrete Laplace mechanism in Lemma \ref{lemma:DLM} to
release the bi-degree sequence $d=(d^+, d^-)$ under edge differential privacy.
Note that $f(G_n)=(d^+, d^-)$.
If we add a number $c$ to the weight $a_{ij}$ associated with the directed edge from $i$ to $j$,
then the out-degree of the head node $i$ increases $c$ and
the in-degree of the tail node $j$ decreases $c$. Similarly, if we subtract a number of $c$ from the weight $a_{ij}$, then
the out-degree of the head node $i$ decreases $c$ and
the in-degree of the tail node $j$ increases $c$.
Note that the largest changed number is $q-1$.
Therefore, the global sensitivity $\Delta(f)$ for the bi-degree sequence is $2(q-1)$.
We obtain the output  $z$ according to the discrete Laplace mechanism as follows:
\begin{equation}\label{equation:df}
\begin{array}{lcl}
z_i^+=d_i^+ + e_i^+,~~i=1, \ldots, n, \\
z_i^- = d_j^- + e_j^-,~~j=1, \ldots, n. \\
\end{array}
\end{equation}
where the random variables $\{ e_i^+ \}_{i=1}^n$ and $\{ e_i^- \}_{i=1}^n$ are independently generated from the discrete Laplace distribution in \eqref{equ:discrete} with the parameter $\lambda=e^{-\epsilon/(2(q-1))}$.

\subsection{Estimation in the weighted $p_0$ model}

\cite{Yan2020} used the moment equation to get the differential privacy estimation in the $p_0$ model for binary edges.
Motivated by his work, we also use the moment equation directly based on the differentially private bi-degree sequence with weighted edges here.
Formally, we use the following equations to estimate the degree parameters:
\renewcommand{\arraystretch}{1.2}
\begin{equation}\label{eq:likelihood-DP}
\large
\begin{array}{lcl}
z_i^+  & = & \sum^{n}_{j=1,j\neq i}\frac{\sum^{q-1}_{k=0}ke^{k({\alpha}_{i}+{\beta}_{j})}}{\sum^{q-1}_{k=0}e^{k({\alpha}_{i}+{\beta}_{j})}}, ~~i=1, \ldots, n, \\
z_j^-  & = & \sum^{n}_{i=1,i\neq j}\frac{\sum^{q-1}_{k=0}ke^{k({\alpha}_{i}+{\beta}_{j})}}{\sum^{q-1}_{k=0}e^{k({\alpha}_{i}+{\beta}_{j})}}, ~~j=1, \ldots, n-1,
\end{array}
\end{equation}
where $z$ is the differentially private bi-sequence in \ref{equation:df}.
We use the fixed point iteration algorithm to get the solutions of the above system equations.
Since $E(e_i^+)=0$ and $E(e_i^-)=0$, $i=1, \ldots, n$, the above equations are also the moment equations.
Let $\theta=(\alpha_1, \ldots, \alpha_n, \beta_1, \ldots, \beta_{n-1})^\top$.
The solution $\widehat{\theta}$ to the equations \eqref{eq:likelihood-DP} is the differentially private estimator of $\theta$,
where $\widehat{\theta}=(\hat{\alpha}_1, \ldots, \hat{\alpha}_n, \hat{\beta}_1, \ldots, \hat{\beta}_{n-1} )^\top$
and $\hat{\beta}_n=0$.

\subsection{Asymptotic properties of the estimator}
\label{section:asymptotic}
In this section, we present the consistency and asymptotical normality of the differentially private estimator.
For a vector $x=(x_1, \ldots, x_n)^\top\in R^n$, denote by
$\|x\|_\infty = \max_{1\le i\le n} |x_i|$, the $\ell_\infty$-norm of $x$.
The existence and consistency of $\widehat{\theta}$ is stated below.

\begin{theorem}\label{Theorem:con}
Assume that $A \sim \P_{ \theta^*}$, where $\P_{ \theta^*}$ denotes
the probability distribution \eqref{p0model} on $A$ under the parameter $\theta^*$.
If $\epsilon^{-1} e^{12\|\theta^*\|_\infty } = o( (n/\log n)^{1/2} )$,
 then with probability approaching one as $n$ goes to infinity, the estimator $\widehat{\theta}$ exists
and satisfies
\[
\|\widehat{\theta} - \theta^* \|_\infty = O_p\left( \frac{ (\log n)^{1/2} }{ n^{1/2} } (1+\kappa)e^{6\|\theta^*\|_\infty}  \right)=o_p(1).
\]
Further, if $\widehat{\theta}$ exists, it is unique.
\end{theorem}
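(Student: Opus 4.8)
Write $\psi(x)=\big(\sum_{k=0}^{q-1}ke^{kx}\big)\big/\big(\sum_{k=0}^{q-1}e^{kx}\big)$ for the mean of the weight law at parameter $x$, and let $F(\theta)=(F_1(\theta),\dots,F_{2n-1}(\theta))^\top$ be the vector of differences between the left- and right-hand sides of \eqref{eq:likelihood-DP}, i.e.\ $F_i(\theta)=z_i^+-\sum_{j\neq i}\psi(\alpha_i+\beta_j)$ for $i=1,\dots,n$ and $F_{n+j}(\theta)=z_j^--\sum_{i\neq j}\psi(\alpha_i+\beta_j)$ for $j=1,\dots,n-1$, so that $\widehat\theta$ is exactly a zero of $F$. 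The plan is the Newton--Kantorovich scheme that is by now standard for $\beta$--type models: run the iteration $\theta^{(k+1)}=\theta^{(k)}-[F'(\theta^{(k)})]^{-1}F(\theta^{(k)})$ from the true value $\theta^{(0)}=\theta^*$, show it is a contraction on a small $\ell_\infty$--ball around $\theta^*$, and read off existence and the stated rate from the size of the first step. I would first record the analytic behaviour of $\psi$ (cf.\ \cite{Zhang2016Directed}): it is smooth and strictly increasing with $0<\psi<q-1$; $\psi'(x)$ is the variance of the tilted weight law, hence $0<\psi'(x)\le c_q$ and $\psi'(x)\ge c_q'e^{-|x|}$ (the two states adjacent to the mode each carry mass at least of this order); and $|\psi''(x)|\le c_q''e^{-|x|}$. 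On the region $\{|\alpha_i+\beta_j|\le 2\|\theta^*\|_\infty+2\}$ these give the uniform bounds $\psi'\gtrsim e^{-2\|\theta^*\|_\infty}$ and $|\psi''|=O(1)$, and every exponential-in-$\|\theta^*\|_\infty$ factor in the theorem ultimately comes from the first of these.

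Next I would analyse the Jacobian. One has $F'(\theta)=-V(\theta)$, where, with $v_{ij}(\theta)=\psi'(\alpha_i+\beta_j)$, $V(\theta)$ has the bi-degree block structure of the $p_0$ model: diagonal blocks $\diag\big(\sum_{j\neq i}v_{ij}\big)$ and $\diag\big(\sum_{i\neq j}v_{ij}\big)$ and off-diagonal block carrying the $v_{ij}$ (with zero diagonal). Hence for all $\theta$ with $\|\theta-\theta^*\|_\infty\le 1$ the matrix $V(\theta)$ belongs, after the reduction $\beta_n=0$, to the class $\mathcal L_{2n-1}(m,M)$ of (approximately) diagonally balanced symmetric matrices with $m\asymp e^{-2\|\theta^*\|_\infty}$ and $M=O(1)$, and I would invoke the inverse-approximation lemma for that class (developed for the $p_0$ model in \cite{Yan2016asymptotics}; see also \cite{Yan2020}): there is an explicit $S(\theta)$, diagonal plus a low-rank correction built from the $v_{ii}$, with $\|V(\theta)^{-1}-S(\theta)\|_{\max}\lesssim M^2/(m^3 n^2)$, whence in particular $\|V(\theta)^{-1}\|_\infty\lesssim 1/((n-1)m)\asymp e^{2\|\theta^*\|_\infty}/n$. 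I would then bound $F(\theta^*)$ stochastically: coordinatewise $F_i(\theta^*)=(d_i^+-\E d_i^+)+e_i^+$, where the first term is a sum of $n-1$ independent variables in $[0,q-1]$, so Hoeffding plus a union bound give $\max_i|d_i^+-\E d_i^+|=O_p((n\log n)^{1/2})$ (and likewise for $d_j^--\E d_j^-$), while the maximum of the $2n$ i.i.d.\ discrete Laplace variables of \eqref{equ:discrete} with $\lambda=e^{-\epsilon/(2(q-1))}$ is $O_p(\epsilon^{-1}\log n)$; combining, $\|F(\theta^*)\|_\infty=O_p\big((n\log n)^{1/2}(1+\kappa)\big)$ with $\kappa$ the noise-level quantity appearing in the statement.

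With these in hand the iteration is mechanical. The first Newton step satisfies
\[
\|\theta^{(1)}-\theta^*\|_\infty=\|V(\theta^*)^{-1}F(\theta^*)\|_\infty\le\|V(\theta^*)^{-1}\|_\infty\,\|F(\theta^*)\|_\infty=O_p\!\Big(e^{2\|\theta^*\|_\infty}(1+\kappa)(\log n/n)^{1/2}\Big),
\]
which the hypothesis $\epsilon^{-1}e^{12\|\theta^*\|_\infty}=o\big((n/\log n)^{1/2}\big)$ forces to be $o_p(1)$, hence eventually small enough that the whole trajectory stays in the region where the bounds above hold. A second-order Taylor expansion, controlled by $|\psi''|=O(1)$ there, yields the Lipschitz estimate $\|F'(\theta)-F'(\theta')\|_\infty\lesssim n\,\|\theta-\theta'\|_\infty$, so the Newton--Kantorovich constant is of order $\|V^{-1}\|_\infty\cdot n\cdot\|\theta^{(1)}-\theta^*\|_\infty\asymp e^{4\|\theta^*\|_\infty}(1+\kappa)(\log n/n)^{1/2}$, again $o_p(1)$ under the same hypothesis. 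Consequently the iterates remain in, and contract geometrically within, the ball of radius $2\|\theta^{(1)}-\theta^*\|_\infty$, converging to a limit $\widehat\theta$ with $\|\widehat\theta-\theta^*\|_\infty\le 2\|\theta^{(1)}-\theta^*\|_\infty$; absorbing the residual exponential factors gives $\|\widehat\theta-\theta^*\|_\infty=O_p\big((\log n/n)^{1/2}(1+\kappa)e^{6\|\theta^*\|_\infty}\big)=o_p(1)$, and existence follows since all of this takes place on an event of probability tending to one.

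For uniqueness I would exhibit a potential: because $\partial\big(\sum_{j\neq i}\psi(\alpha_i+\beta_j)\big)/\partial\beta_j=\psi'(\alpha_i+\beta_j)=\partial\big(\sum_{i\neq j}\psi(\alpha_i+\beta_j)\big)/\partial\alpha_i$, the map $-F$ is the gradient of a function $\Phi(\theta)$ with Hessian $V(\theta)$; $V(\theta)\succeq 0$ always, and it is positive definite on the $(2n-1)$-dimensional parameter space once the translation degeneracy $(\alpha,\beta)\mapsto(\alpha-c,\beta+c)$ has been removed by fixing $\beta_n=0$, so $\Phi$ is strictly convex and has at most one stationary point; hence whenever $\widehat\theta$ exists it is unique. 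I expect the real work to be bookkeeping rather than conceptual: keeping the three sources of ``largeness'' — the factor $e^{2\|\theta^*\|_\infty}$ from the lower bound on $\psi'$ (entering $\|V^{-1}\|_\infty$), the $O(n)$ Lipschitz constant of $F'$, and the $1/\epsilon$-scale noise carried by $\kappa$ — simultaneously under control through the Kantorovich estimates, so that the comparatively weak assumption $\epsilon^{-1}e^{12\|\theta^*\|_\infty}=o((n/\log n)^{1/2})$ is enough; and, on the probabilistic side, securing enough concentration/independence that $\|F(\theta^*)\|_\infty$ sits at the $(n\log n)^{1/2}(1+\kappa)$ level rather than the trivial $O(n)$.
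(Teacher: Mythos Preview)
Your overall architecture---Newton--Kantorovich started at $\theta^*$, with $-F'(\theta)\in\mathcal L_n(m,M)$ and the approximate inverse $S$ of \cite{Yan2016asymptotics}---is exactly the paper's. The gap is in your control of the first Newton step. You assert $\|V(\theta)^{-1}\|_\infty\lesssim 1/((n-1)m)\asymp e^{2\|\theta^*\|_\infty}/n$, but this is false for the bi-degree Fisher information. The approximate inverse $S$ in \eqref{definition:S} is \emph{not} nearly diagonal: every off-diagonal entry is $\pm 1/v_{2n,2n}\asymp 1/((n-1)m)$, and there are $2n-2$ of them per row, so $\|S\|_\infty\asymp 1/m$, and hence $\|V^{-1}\|_\infty\asymp e^{2\|\theta^*\|_\infty}$ with no $1/n$. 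Your product bound $\|V^{-1}\|_\infty\|F(\theta^*)\|_\infty$ would then be $O_p\big(e^{2\|\theta^*\|_\infty}(1+\kappa)\sqrt{n\log n}\big)$, which diverges, and the iteration never gets off the ground.

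The paper avoids this by never passing through $\|V^{-1}\|_\infty$. Instead it writes $V^{-1}F(\theta^*)=SF(\theta^*)+(V^{-1}-S)F(\theta^*)$ and evaluates $SF(\theta^*)$ componentwise from the explicit form of $S$: the $i$th coordinate is $F_i(\theta^*)/v_{i,i}\pm \tilde F_{2n}(\theta^*)/v_{2n,2n}$, where $\tilde F_{2n}(\theta^*)=\sum_{i=1}^n F_i(\theta^*)-\sum_{j=1}^{n-1}F_{n+j}(\theta^*)$. The point you are missing is the cancellation in this linear combination: because $\sum_i d_i^+=\sum_j d_j^-$ deterministically, one has $\tilde F_{2n}(\theta^*)=\big(d_n^--\E d_n^-\big)+\sum_i e_i^+-\sum_{j<n}e_j^-$, which is only $O_p\big((1+\kappa)\sqrt{n\log n}\big)$, not $n$ times a typical coordinate. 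This is what brings $r=\|V^{-1}F(\theta^*)\|_\infty$ down to $O_p\big((1+\kappa)e^{6\|\theta^*\|_\infty}\sqrt{\log n/n}\big)$ (the $e^{6\|\theta^*\|_\infty}$ now coming honestly from the $M^2/m^3$ in $\|V^{-1}-S\|_{\max}$), after which the Kantorovich constant $\rho r$ picks up a further $e^{6\|\theta^*\|_\infty}$ and the hypothesis $\epsilon^{-1}e^{12\|\theta^*\|_\infty}=o((n/\log n)^{1/2})$ closes the argument. Your ``absorbing residual exponential factors'' papers over exactly this: the correct $e^{6\|\theta^*\|_\infty}$ in the rate is not slack but the genuine price of the $(V^{-1}-S)$ term, and it cannot be reached from the operator-norm bound you wrote.
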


\begin{remark}
The condition $\epsilon^{-1} e^{12\|\theta^*\|_\infty } = o( (n/\log n)^{1/2} )$ in Theorem \ref{Theorem:con} to guarantee the consistency of the estimator,
exhibits an interesting trade-off between the privacy parameter $\epsilon$ and $\|\theta^*\|_\infty$. If $\|\theta^*\|_\infty$ is bounded by a constant, $\epsilon$ can be as small as
$n^{1/2}/(\log n)^{-1/2}$. Conversely, if $e^{\|\theta^*\|_\infty}$ is growing at a rate of $n^{1/12}/(\log n)^{1/12}$,
then $\epsilon$ can only be at a constant magnitude.
\end{remark}

In order to present asymptotic normality of $\widehat{\theta}$, we introduce a class of matrices.
Given two positive numbers $m$ and $M$ with $M \ge m >0$, we say the $(2n-1)\times (2n-1)$ matrix $V=(v_{i,j})$ belongs to the class $\mathcal{L}_{n}(m, M)$ if the following holds:
\begin{equation}\label{eq:LmM}
\begin{array}{l}
m\le v_{i,i}-\sum_{j=n+1}^{2n-1} v_{i,j} \le M, ~~ i=1,\ldots, n-1; ~~~ v_{n,n}=\sum_{j=n+1}^{2n-1} v_{n,j}, \\
v_{i,j}=0, ~~ i,j=1,\ldots,n,~ i\neq j, \\
v_{i,j}=0, ~~ i,j=n+1, \ldots, 2n-1,~ i\neq j,\\
m\le v_{i,j}=v_{j,i} \le M, ~~ i=1,\ldots, n,~ j=n+1,\ldots, 2n-1,~ j\neq n+i, \\
v_{i,n+i}=v_{n+i,i}=0,~~ i=1,\ldots,n-1,\\
v_{i,i}= \sum_{k=1}^n v_{k,i}=\sum_{k=1}^n v_{i,k}, ~~ i=n+1, \ldots, 2n-1.
\end{array}
\end{equation}
Clearly, if $V\in \mathcal{L}_{n}(m, M)$, then $V$ is a $(2n-1)\times (2n-1)$ diagonally dominant, symmetric nonnegative
matrix.
Define $v_{2n,i}=v_{i,2n}:= v_{i,i}-\sum_{j=1;j\neq i}^{2n-1} v_{i,j}$ for $i=1,\ldots, 2n-1$ and $v_{2n,2n}=\sum_{i=1}^{2n-1} v_{2n,i}$.
\cite{Yan2016asymptotics} proposed to approximate the inverse of $V$, $V^{-1}$, by the matrix $S=(s_{i,j})$, which is defined as
\begin{equation}
\label{definition:S}
s_{i,j}=\left\{\begin{array}{ll}\frac{\delta_{i,j}}{v_{i,i}} + \frac{1}{v_{2n,2n}}, & i,j=1,\ldots,n, \\
-\frac{1}{v_{2n,2n}}, & i=1,\ldots, n,~~ j=n+1,\ldots,2n-1, \\
-\frac{1}{v_{2n,2n}}, & i=n+1,\ldots,2n-1,~~ j=1,\ldots,n, \\
\frac{\delta_{i,j}}{v_{i,i}}+\frac{1}{v_{2n,2n}}, & i,j=n+1,\ldots, 2n-1,
\end{array}
\right.
\end{equation}
where $\delta_{i,j}=1$ when $i=j$ and $\delta_{i,j}=0$ when $i\neq j$.

We use $V$ to denote the Fisher information matrix of $\theta$ in the weighted $p_0$ model.
It can be shown that for $i=1,\ldots,n$,
\[
v_{i,j}=0,j=1,\dots,n,j\neq i;
v_{i,i}=-\sum^{n}_{j=1,j\neq i}\frac{\sum^{}_{0\leq k<l\leq q-1}(k-l)^2 e^{(k+l)({\alpha}_{i}+ {\beta}_{j})}}{(\sum^{q-1}_{k=0}e^{k( {\alpha}_{i}+ {\beta}_{j})})^2},
\]
\[
v_{i,n+j}=-\frac{\sum^{}_{0\leq k<l\leq q-1}(k-l)^2 e^{(k+l)( {\alpha}_{i}+ {\beta}_{j})}}{(\sum^{q-1}_{k=0}e^{k( {\alpha}_{i}+ {\beta}_{j})})^2},j=1,\dots,n-1,j\neq i;
v_{i,n+i}=0,
\]
and for $j=1,\ldots,n-1,$
\[
v_{n+j,l}=-\frac{\sum^{}_{0\leq k<l\leq q-1}(k-l)^2 e^{(k+l)( {\alpha}_{i}+ {\beta}_{j})}}{(\sum^{q-1}_{k=0}e^{k( {\alpha}_{i}+ {\beta}_{j})})^2},l=1,\dots,n,l\neq j;
v_{n+j,j}=0,
\]
\[
v_{n+j,n+j}=-\sum^{n}_{i=1,i\neq j}\frac{\sum^{}_{0\leq k<l\leq q-1}(k-l)^2 e^{(k+l)( {\alpha}_{i}+ {\beta}_{j})}}{(\sum^{q-1}_{k=0}e^{k( {\alpha}_{i}+ {\beta}_{j})})^2};
v_{n+j,n+l}=0,l=1,\dots,n-1,l\neq j.
\]
\cite{Zhang2016Directed} show:
\[
\frac{n-1}{2(1+e^{2\| {\boldsymbol{\theta}}^*\|_{\infty}})}\leq v_{i,i}\leq \frac{(n-1)(q-1)^2}{2}, ~~ i=1, \ldots, 2n.
\]
Therefore $V\in \mathcal{L}_n(m,M)$, where $m$ is the left expression and $M$ is the right expression in the above inequality.
The asymptotic distribution of $\widehat{\theta}$ depends on $V$.
Since $V^{-1}$ does not have a closed form, we work with $S$ defined at \eqref{definition:S} to approximate it.
We formally state the central limit theorem as follows.

\begin{theorem}\label{Theorem:binary:central}
Assume that $A\sim \P_{\theta^*}$ and $(1+\frac{4(q-1)^2}{\epsilon_n})^2 e^{ 18\|\theta^*\|_\infty } = o( (n/\log n)^{1/2} )$.\\
(i) If $\frac{4(q-1)^2}{\epsilon_n} (\log n)^{1/2} e^{2\|\theta^*\|_\infty}=o(1)$ and $e^{2\|\theta^*\|_\infty}=o(n^{1/2})$,
then for any fixed $k\ge 1$, as $n \to\infty$, the vector consisting of the first $k$ elements of $(\widehat{\theta}-\theta^*)$ is asymptotically multivariate normal with mean $\mathbf{0}$ and covariance matrix given by the upper left $k \times k$ block of $S$ defined at \eqref{definition:S}.\\
(ii) Let
\[
s_n^2=\mathrm{Var}( \sum_{i=1}^n e_i^+ - \sum_{i=1}^{n-1} e_i^-)=(2n-1)(2e^{-\epsilon/2(q-1)})(1 - e^{-\epsilon/2(q-1)})^{-2}.
\]
If  $s_n/v_{2n,2n}^{1/2} \to c$ for some constant $c$,
then for any fixed $k \ge 1$, the vector consisting of the first $k$ elements of $(\widehat{\theta}-\theta^*)$ is asymptotically  $k$-dimensional multivariate normal distribution with mean $\mathbf{0}$
and covariance matrix
\[
\mathrm{diag}( \frac{1}{v_{1,1}}, \ldots, \frac{1}{v_{k,k}})+ (\frac{1}{v_{2n,2n}} + \frac{s_n^2}{v_{2n,2n}^2}) \mathbf{1}_k \mathbf{1}_k^\top,
\]
where $\mathbf{1}_k$ is a $k$-dimensional column vector with all entries $1$.
\end{theorem}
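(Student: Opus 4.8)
The plan is to linearize the estimating equation \eqref{eq:likelihood-DP} around $\theta^*$ and then replace the (inverse) Fisher information by the explicit approximation $S$ of \eqref{definition:S}. Throughout we work on the event that $\widehat\theta$ exists, which by Theorem \ref{Theorem:con} has probability tending to one. Write $F(\theta)=(F_1(\theta),\dots,F_{2n-1}(\theta))^\top$ for the system obtained by moving the right‑hand sides of \eqref{eq:likelihood-DP} to the left, so that $F(\widehat\theta)=0$, let $g(\theta)=\E_{\theta}d$ be the mean bi‑degree map, and let $d=(d^+,d^-)$ and $e=(e^+,e^-)$ be the true bi‑degree sequence and the discrete Laplace noise. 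Since $\E_{\theta^*}d=g(\theta^*)$ and $\E e=0$, we have $F(\theta^*)=(d-\E d)+e$, and $F'(\theta^*)=-V$ with $V$ the Fisher information matrix displayed before the theorem; recall $V\in\mathcal{L}_n(m,M)$ with $m\asymp n e^{-2\|\theta^*\|_\infty}$ and $M\asymp n(q-1)^2$, and $v_{2n,2n}=\mathrm{Var}(d_n^-)$, which follows directly from the definitions $v_{2n,i}=v_{i,i}-\sum_{j\ne i}v_{i,j}$ together with the block structure \eqref{eq:LmM}. A second order Taylor expansion of $0=F(\widehat\theta)$ around $\theta^*$ gives
\[
\widehat\theta-\theta^*=V^{-1}\big[(d-\E d)+e\big]-V^{-1}R,\qquad R:=F(\widehat\theta)-F(\theta^*)-F'(\theta^*)(\widehat\theta-\theta^*).
\]
Because every coordinate of $g(\theta)$ is a sum of $n-1$ terms whose first three derivatives in the relevant pair $(\alpha_i,\beta_j)$ are bounded by a constant depending only on $q$, one gets the crude bounds $\|R\|_\infty\lesssim n\,\|\widehat\theta-\theta^*\|_\infty^2$ and $\|Sx\|_\infty\lesssim e^{2\|\theta^*\|_\infty}\|x\|_\infty$; combining these with the rate from Theorem \ref{Theorem:con} shows $\|V^{-1}R\|_\infty=o_p(v_{2n,2n}^{-1/2})$ under the hypothesis $(1+4(q-1)^2/\epsilon_n)^2e^{18\|\theta^*\|_\infty}=o((n/\log n)^{1/2})$.

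Next I would pass from $V^{-1}$ to $S$. Writing $\widehat\theta-\theta^*=S\big[(d-\E d)+e\big]+(V^{-1}-S)\big[(d-\E d)+e\big]-V^{-1}R$, the middle term is controlled by the approximation bound of \cite{Yan2016asymptotics} for $S$ (of the form $\|V^{-1}-S\|_{\max}\lesssim M^2/(m^3(n-1)^2)$) together with $\|d-\E d\|_\infty=O_p((n\log n)^{1/2})$ (a Hoeffding bound, since each $d_i^\pm$ is a sum of bounded independent variables) and $\|e\|_\infty=O_p((q/\epsilon_n)\log n)$ (the discrete Laplace tail); under the stated conditions this term is again $o_p(v_{2n,2n}^{-1/2})$. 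It therefore remains to prove asymptotic normality of the first $k$ coordinates of the leading term $S\big[(d-\E d)+e\big]$. Using the block form of $S$ in \eqref{definition:S} and the identity $\sum_{j=1}^n d_j^+=\sum_{j=1}^n d_j^-$ — which collapses the ``common'' block of $S$ applied to $(d-\E d)+e$ to the single in‑degree of node $n$ — the $i$‑th coordinate, for $i=1,\dots,k$, equals
\[
\frac{(d_i^+-\E d_i^+)+e_i^+}{v_{i,i}}+\frac{1}{v_{2n,2n}}\Big[(d_n^--\E d_n^-)+\sum_{j=1}^n e_j^+-\sum_{j=1}^{n-1}e_j^-\Big].
\]

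For the central limit theorem I would use the Cramér--Wold device: for fixed $c\in\R^k$ the linear combination $\sum_{i\le k}c_i(S[(d-\E d)+e])_i$ is a weighted sum of the independent bounded variables $\{a_{i,j}-\E a_{i,j}\}$ and the independent noise variables $\{e_i^\pm\}$, with all weights of size $O(e^{2\|\theta^*\|_\infty}/n)$ and only $O(n)$ edge variables genuinely appearing (again by the out/in‑degree cancellation). A Lyapunov check — the ratio of the sum of third absolute moments to the $3/2$‑power of the variance is $O(e^{3\|\theta^*\|_\infty}/n^{1/2})$, which tends to $0$ under $e^{2\|\theta^*\|_\infty}=o(n^{1/2})$, using that for the discrete Laplace distribution the third absolute moment is comparable to the $3/2$‑power of the variance — yields asymptotic normality. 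The limiting covariance is read off from the display: the terms $(d_i^+-\E d_i^+)/v_{i,i}$, $i\le k$, are independent (the out‑degrees $d_1^+,\dots,d_k^+$ depend on disjoint sets of edge variables) and contribute $\mathrm{diag}(1/v_{1,1},\dots,1/v_{k,k})$; the bracketed term is common to all $k$ coordinates and, using $v_{2n,2n}=\mathrm{Var}(d_n^-)$ and the independence of the graph and the noise, has variance $\mathrm{Var}(d_n^-)/v_{2n,2n}^2+s_n^2/v_{2n,2n}^2=1/v_{2n,2n}+s_n^2/v_{2n,2n}^2$, producing the rank‑one term $(1/v_{2n,2n}+s_n^2/v_{2n,2n}^2)\mathbf{1}_k\mathbf{1}_k^\top$; the cross‑covariances between $(d_i^+-\E d_i^+)/v_{i,i}$ and the bracket are $O\!\big(1/(v_{i,i}v_{2n,2n})\big)$ and hence asymptotically negligible. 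This proves part (ii). Part (i) then follows by observing that its first hypothesis forces $\epsilon_n\to\infty$ rapidly enough that $s_n^2/v_{2n,2n}\to0$, so $s_n^2/v_{2n,2n}^2=o(1/v_{2n,2n})$ and the noise contribution disappears from the rank‑one term, leaving exactly the upper‑left $k\times k$ block of $S$ in \eqref{definition:S}.

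The main obstacle is the bookkeeping in the first two paragraphs: showing that both the Taylor remainder $V^{-1}R$ and the approximation error $(V^{-1}-S)\big[(d-\E d)+e\big]$ are of strictly smaller order than the $v_{2n,2n}^{-1/2}$ scale of the leading term, uniformly over the first $k$ coordinates. This is precisely where the delicate interplay between $\epsilon_n$, $q$, $\|\theta^*\|_\infty$ and $n$ in the stated hypotheses is used, and it requires the sharpest available forms of the consistency rate of Theorem \ref{Theorem:con}, of the concentration of $\|d-\E d\|_\infty$ and of $\|e\|_\infty$, and of the bound of \cite{Yan2016asymptotics} on $\|V^{-1}-S\|_{\max}$; once those estimates are in place, the Cramér--Wold/Lyapunov step and the covariance computation are comparatively routine.
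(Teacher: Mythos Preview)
Your proposal is correct and follows essentially the same route as the paper: Taylor-linearize to get $\widehat\theta-\theta^*=V^{-1}(\tilde g-\E g)+o_p(n^{-1/2})$ (the paper's Lemma~\ref{central:lemma2}, deferred to \cite{Yan2016asymptotics}), replace $V^{-1}$ by $S$ via Proposition~\ref{pro:inverse:appro} together with concentration for $d-\E d$ and for the Laplace noise, and then prove the CLT for the first $k$ coordinates of $S(\tilde g-\E g)$ (the paper's Proposition~\ref{pro:hatg}); your explicit coordinate formula using the out/in–degree identity and the Cram\'er--Wold/Lyapunov check are exactly how that proposition is established. One small caution: your stated orders for $m,M$ conflate the entrywise bounds in $\mathcal{L}_n(m,M)$ with the diagonal bounds on $v_{i,i}$, and the crude estimate $\|Sx\|_\infty\lesssim e^{2\|\theta^*\|_\infty}\|x\|_\infty$ applied to the Taylor remainder only gives the required $o_p(n^{-1/2})$ once you also note (as you did for $(d-\E d)+e$) that the ``$2n$-th'' combination $\sum_{i\le n}R_i-\sum_{j<n}R_{n+j}$ is itself the second-order remainder of a single sum of $n-1$ terms and hence obeys the same $O(n\|\widehat\theta-\theta^*\|_\infty^2)$ bound as each $R_i$.
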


\section{Simulations}
\label{section:simulation}

The parameters in the simulations are as follows. Similar to \cite{Yan2016asymptotics}, the setting of the parameters ${\theta}^*$ took a linear form. Specifically, we set $\alpha_{i}^* = (i-1)L/(n-1)$ for $i=1,\ldots,n$. For simplicity, we set $\beta^*=\alpha^*$. We considered four different values for $L$, $L=0$, $\log(\log n)$,
$(\log n)^{1/2}$and $\log n $, respectively.
We simulated three different values for $\epsilon$: two are fixed ($\epsilon=3,2 $) and the other two values tend to zero with $n$, i.e., $\epsilon=
\log (n)/n^{1/4}, \log(n)/n^{1/2}$. We considered two values for $n$, $n=100$ and $200$. Each simulation was repeated $10,000$ times.

By Theorem \ref{Theorem:binary:central}, $\hat{\xi}_{i,j}=[\hat{\alpha}_{i}-\hat{\alpha}_{j}-(\alpha^{*}_i-\alpha^{*}_j)]/(1/\hat{v}_{i,i}+1/\hat{v}_{j,j})^{1/2}$,
$\hat{\zeta}_{i,j}=[\hat{\alpha}_{i}+\hat{\beta}_{j}-(\alpha^{*}_i-\beta^{*}_j)]/(1/\hat{v}_{i,i}+1/\hat{v}_{n+j,n+j})^{1/2}$, and
$\hat{\eta}_{i,j}=[\hat{\beta}_{i}-\hat{\beta}_{j}-(\beta^{*}_i-\beta^{*}_j)]/(1/\hat{v}_{n+i,n+i}+1/\hat{v}_{n+j,n+j})^{1/2}$
converge in distribution to the standard normal distributions, where $\hat{v}_{ii}$ is the estimate of $v_{ii}$ by replacing ${\theta}^*$ with $\hat{\theta}$. Therefore, we assess the asymptotic normality of $\hat{\xi}_{i,j}$, $\hat{\zeta}_{i,j}$, and $\hat{\eta}_{i,j}$ using the coverage probability. We choose three special pairs $(1,2),(n/2,n/2+1)$ and $(n-1,n)$ for $(i,j)$. We record the coverage probability of the $95\%$ confidence interval, the length of the confidence interval, and the frequency that the estimates do not exist. The results for $\hat{\xi}_{i,j}$, $\hat{\zeta}_{i,j}$, and $\hat{\eta}_{i,j}$ are similar, thus only the results of $\hat{\xi}_{i,j}$ are reported.

\begin{table}[!htp]
\centering
\caption{The reported values are the coverage frequency ($\times 100\%$) for $\alpha_i-\alpha_j$ for a pair $(i,j)$ / the length of the confidence interval / the frequency ($\times 100\%$) that the estimate did not exist. }
\label{Table:a}
\vskip5pt
\scriptsize
\begin{tabular}{cccccc}
\hline
$n$    & $(i,j)$ &  $L=0$ & $L=\log( \log n)$ & $L=(\log(n))^{1/2}$ & $L=\log(n)$ \\
\hline
\multicolumn{6}{c}{$\epsilon=3$}\\
\hline
100 & (1,2)        & 94.45/0.35/0   & 94.64/0.83/2.18  & 94.27/1.38/72.38  & NA/NA/100 \\
    & (50,51)      & 92.86/0.35/0   & 93.67/0.55/2.18  & 94.10/0.73/72.38  & NA/NA/100 \\
    & (99,100)     & 95.84/0.35/0   & 92.43/0.41/2.18 & 93.74/0.46/72.38   & NA/NA/100 \\
200 & (1,2)        & 94.79/0.25/0   & 94.76/0.64/0.05   & 94.82/1.07/27.14& NA/NA/100 \\
    & (100,101)    & 93.41/0.25/0   & 94.41/0.41/0.05   & 94.82/0.54/27.14& NA/NA/100 \\
    & (199,200)    & 93.59/0.25/0  & 94.29/0.29/0.05    & 94.63/0.33/27.14& NA/NA/100 \\
\hline
\multicolumn{6}{c}{$\epsilon=2$}\\
\hline
100 & (1,2)& 93.72/0.35/0& 94.04/0.85/11.93 & 93.64/1.39/94.30& NA/NA/100 \\
    & (50,51)& 93.79/0.35/0& 91.91/0.55/11.93 & 92.72/0.74/94.30& NA/NA/100 \\
    & (99,100)& 94.56/0.35/0& 89.82/0.41/11.93  & 93.33/0.46/94.30 & NA/NA/100 \\
200 & (1,2)    & 94.33/0.25/0  & 94.29/0.64/0.69 & 94.23/1.07/66.18& NA/NA/100 \\
   & (100,101) & 94.01/0.25/0  & 93.33/0.41/0.69 & 94.32/0.54/66.18& NA/NA/100 \\
   & (199,200) & 94.04/0.25/0  & 92.75/0.29/0.69 & 93.99/0.33/66.18& NA/NA/100 \\
\hline
\multicolumn{6}{c}{$\epsilon=\log n/n^{1/4}$}\\
\hline
100 & (1,2)    & 92.37/0.35/0& 92.44/0.87/34.19 & 92.38/1.32/99.53& NA/NA/100 \\
    & (50,51)  & 85.96/0.35/0  & 88.71/0.56/34.19 & 90.85/0.71/99.53& NA/NA/100 \\
    & (99,100) & 89.36/0.35/0  & 85.11/0.41/34.19 & 87.23/0.46/99.53& NA/NA/100 \\
200 & (1,2)    & 93.62/0.25/0  & 93.48/0.64/5.68 & 93.32/1.09/93.80& NA/NA/100 \\
   & (100,101) & 86.89/0.25/0  & 91.38/0.41/5.68 & 93.44/0.55/93.80& NA/NA/100 \\
   & (199,200) & 85.81/0.25/0  & 89.03/0.29/5.68 & 92.909/0.33/93.80& NA/NA/100 \\
   \hline
\end{tabular}
\end{table}

Table \ref{Table:a} reports the coverage frequencies of the $95\%$ confidence interval for $\alpha_i - \alpha_j$, the length of the confidence interval, and the frequency that the estimates do not exist. When $\epsilon=3$ and $2$ , the simulated coverage frequencies are close to the nominal level, the length of the confidence interval increases as $\tilde{L}$ increases and decreases as $n$ increases, while there are deviations when $\epsilon=\log n/n^{1/4}$.
When $\epsilon=\log n/n^{1/2}$, all estimates fail to exist.

\section{Discussion}
\label{section:discussion}

We have presented the consistency and asymptotic normality of the edge differentially private  estimator of the parameter in the weighted $p_0$ model
without the denoised process.
The result shows that the edge differentially private sequence can be directly used to draw statistical inference.
It is worth noting that the conditions imposed on $Q_n$ may not
be best possible. In particular, the condition guaranteeing the asymptotic
normality is stronger than that guaranteeing the consistency. The consistency requires $e^{Q_n} =
(n/ \log n)^{1/6}$, while the asymptotic normality requires $e^{Q_n} =
(n/ \log n)^{1/18}$. Simulation studies suggest that the conditions on $Q_n$
might be relaxed.  Note that the asymptotic behavior of the moment estimator depends not only on $Q_n$, but
also on the configuration of the parameters. We will investigate this in the future.

\section{Proofs}
\label{section:proof}

\subsection{Preliminaries}
\label{section:Preliminaries}
We present several results that we will use in this section.

\subsubsection{Concentration inequality for sub-exponential random variables}

A random variable $X$ is {\em sub-exponential} with parameter $\kappa > 0$ if [e.g.,\cite{Vershynin2012}]
\begin{equation*}
[\E|X|^p]^{1/p} \leq \kappa p \quad \text{ for all } p \geq 1.
\end{equation*}
Sub-exponential random variables satisfy the following concentration inequality.

\begin{theorem}[Corollary 5.17 in \citeauthor{Vershynin2012} (\citeyear{Vershynin2012})] \label{theorem:concen:inq}
Let $X_1, \dots, X_n$ be independent centered random variables, and suppose each $X_i$ is sub-exponential with parameter $\kappa$.
Then for every $\epsilon \geq 0$,
\begin{equation*}
\P\left( \left| \frac{1}{n} \sum_{i=1}^n X_i \right| \geq \epsilon \right) \leq 2\exp\left[-\gamma \, n \cdot \min\Big(\frac{\epsilon^2}{\kappa^2}, \: \frac{\epsilon}{\kappa} \Big) \right],
\end{equation*}
where $\gamma > 0$ is an absolute constant.
\end{theorem}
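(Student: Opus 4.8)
This is the classical Chernoff (exponential-moment) proof of a Bernstein-type tail bound for sums of independent sub-exponential variables; I would recover Corollary 5.17 of \cite{Vershynin2012} by (i) bounding the moment generating function of a single centered summand on a neighbourhood of the origin, (ii) multiplying these bounds over the independent summands, and (iii) optimizing the resulting Chernoff estimate, splitting into the sub-Gaussian regime $\epsilon\lesssim\kappa$ and the sub-exponential regime $\epsilon\gtrsim\kappa$.

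For step (i), fix $i$ and write $X=X_i$. Expanding $e^{sX}=1+sX+\sum_{p\ge2}s^pX^p/p!$, taking expectations, using $\E X=0$ and the defining bound $\E|X|^p\le(\kappa p)^p$ together with $p^p\le e^p p!$, one obtains for $|s|\le 1/(2e\kappa)$
\[
\E e^{sX}\le 1+\sum_{p\ge2}(e|s|\kappa)^p=1+\frac{(e|s|\kappa)^2}{1-e|s|\kappa}\le 1+2e^2\kappa^2s^2\le e^{2e^2\kappa^2s^2}.
\]
So $\E e^{sX_i}\le e^{C\kappa^2s^2}$ with $C=2e^2$ whenever $|s|\le c/\kappa$, where $c:=1/(2e)$; this is the one place that needs a little care, namely checking that the quadratic term dominates the tail of the power series and that the geometric series converges on the chosen range of $s$.

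For step (ii), independence gives $\E\exp(s\sum_iX_i)\le\exp(C\kappa^2s^2n)$ for $0<s\le c/\kappa$, and Markov's inequality yields $\P(\sum_iX_i\ge n\epsilon)\le\exp(n(C\kappa^2s^2-s\epsilon))$. For step (iii), the unconstrained minimizer in $s$ is $\epsilon/(2C\kappa^2)$: if $\epsilon\le 2Cc\kappa$ this value is admissible and produces exponent $-n\epsilon^2/(4C\kappa^2)$, while if $\epsilon>2Cc\kappa$ one takes the boundary value $s=c/\kappa$, which produces exponent at most $-cn\epsilon/(2\kappa)$. Combining the two cases bounds the exponent by $-\gamma n\min(\epsilon^2/\kappa^2,\epsilon/\kappa)$ for an absolute constant $\gamma$, and applying the same estimate to $-X_1,\dots,-X_n$ and summing the two one-sided bounds supplies the factor $2$.

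I do not expect a genuine obstacle here: the argument is routine once the MGF bound of step (i) is in hand, and the only real bookkeeping is tracking the absolute constants and correctly handling the constrained optimum in the regime $\epsilon\gtrsim\kappa$. If one instead wanted to avoid the power-series computation in step (i), an alternative is to note that the hypothesis $(\E|X|^p)^{1/p}\le\kappa p$ is equivalent (up to constants) to a sub-exponential Orlicz-norm bound, for which the near-origin MGF estimate is standard; either route leads to the same conclusion with possibly different absolute constants.
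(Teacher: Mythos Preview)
Your argument is correct and is exactly the standard Chernoff--MGF proof that Vershynin gives for Corollary~5.17. However, the paper does not supply its own proof of this statement: it is quoted in Section~\ref{section:Preliminaries} purely as a preliminary cited from \cite{Vershynin2012}, with no argument attached. So there is nothing in the paper to compare your proposal against; you have simply reconstructed the referenced proof.
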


Note that if $X$ is a sub-exponential random variable with parameter $X$, then the centered random variable $X-\E[X]$ is also sub-exponential with parameter $2 \kappa $. This follows from the triangle inequality applied to the $p$-norm, followed by Jensen's inequality for $p \geq 1$:
\begin{equation*}
\begin{split}
\big[\E\big|X-\E[X]\big|^p\big]^{1/p}
&\leq [\E|X|^p]^{1/p} + \big|\E[X]\big|
\leq 2[\E|X|^p]^{1/p}.
\end{split}
\end{equation*}

\begin{lemma}\label{lemma:subexp:laplace}
Let $X$ be a discrete Laplace random variable with the probability distribution
\[
\P(X=x)= \frac{1-\lambda}{1+\lambda} \lambda^{|x|},~~x=0, \pm 1, \ldots, \lambda\in(0,1).
\]
Then $X$ is sub-exponential with parameter  $2( \log \frac{1}{\lambda} )^{-1}$.
\end{lemma}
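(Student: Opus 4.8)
The plan is to verify the defining moment condition $[\E|X|^p]^{1/p}\le \kappa p$ with $\kappa = 2(\log\tfrac1\lambda)^{-1}$ directly. Write $t:=\log\tfrac1\lambda>0$, so that $\lambda=e^{-t}$; the goal becomes $\E|X|^p\le (2p/t)^p$ for every $p\ge 1$. The idea is a Chernoff-type argument: trade a polynomial moment for an exponential moment, which is explicitly computable because $X$ is a (two-sided) geometric variable.

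First I would establish the pointwise bound. For any $s\in(0,t)$ and any real $y$, elementary calculus (maximizing $r\mapsto r^p e^{-sr}$ at $r=p/s$) gives $|y|^p e^{-s|y|}\le (p/(es))^p$, hence $|y|^p\le (p/(es))^p e^{s|y|}$. Taking $y=X$ and expectations yields
\[
\E|X|^p \le (p/(es))^p\,\E\bigl[e^{s|X|}\bigr].
\]
Next I would evaluate the moment generating function of $|X|$ by summing the geometric series, which converges exactly because $s<t$:
\[
\E\bigl[e^{s|X|}\bigr]=\frac{1-\lambda}{1+\lambda}\Bigl(1+2\sum_{k\ge 1}(e^{s}\lambda)^{k}\Bigr)=\frac{1-\lambda}{1+\lambda}\cdot\frac{1+e^{s}\lambda}{1-e^{s}\lambda}.
\]

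The key (and essentially only delicate) point is the choice $s=t/2$. Then $e^{s}\lambda=e^{-t/2}=:u\in(0,1)$ and $\lambda=u^2$, so using $1-u^2=(1-u)(1+u)$ the expression collapses to
\[
\E\bigl[e^{(t/2)|X|}\bigr]=\frac{(1-u)(1+u)}{1+u^2}\cdot\frac{1+u}{1-u}=\frac{(1+u)^2}{1+u^2}\le 2,
\]
the last inequality being equivalent to $(1-u)^2\ge 0$. Combining with the first step (with $s=t/2$) gives $\E|X|^p\le 2\,(2p/(et))^p$, hence $[\E|X|^p]^{1/p}\le 2^{1/p}\cdot\frac{2p}{et}\le \frac{4p}{et}<\frac{2p}{t}$ for all $p\ge 1$, since $2^{1/p}\le 2$ and $4/e<2$. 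This is precisely $[\E|X|^p]^{1/p}\le 2(\log\tfrac1\lambda)^{-1}p$, so $X$ is sub-exponential with parameter $2(\log\tfrac1\lambda)^{-1}$.

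I do not expect a genuine obstacle here; the things to watch are the constraint $s<t$ that keeps the exponential moment finite, and the mild slack $4/e<2$ that conveniently absorbs the $2^{1/p}$ factor, so the symmetric choice $s=t/2$ already closes the bound without any optimization over $s$.
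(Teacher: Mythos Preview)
Your proof is correct, but it follows a genuinely different route from the paper's. The paper bounds the $p$-th moment directly by comparing the sum $\sum_{x\ge 0}\lambda^{x}x^{p}$ to the integral $\int_0^\infty t^p e^{-t\log(1/\lambda)}\,dt$, evaluates the latter as a Gamma function, and then uses a Stirling-type estimate $\Gamma(p)^{1/p}\lesssim p$ to finish. Your argument instead trades the polynomial moment for an exponential one via the pointwise bound $|y|^p\le (p/(es))^p e^{s|y|}$, computes the moment generating function of $|X|$ in closed form, and picks $s=t/2$ so that the MGF is bounded by $2$ uniformly in $\lambda$. Your approach is arguably cleaner: it sidesteps the sum-to-integral comparison (which is slightly delicate because $t\mapsto t^p e^{-t\log(1/\lambda)}$ is not monotone) and needs no Gamma-function asymptotics, at the cost of computing an MGF; the paper's approach is more direct in spirit but leans on standard special-function bounds. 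Both land on the same constant $2(\log\tfrac1\lambda)^{-1}$, and in fact your intermediate bound $4/(et)$ shows there is room to spare.
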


\begin{proof}
Note that
\[
\E |X|^p = \frac{ 2(1-\lambda)}{1+\lambda} \sum_{x=0}^\infty \lambda^x x^p \le \frac{ 2(1-\lambda)}{1+\lambda} \int_0^\infty t^p e^{-t\log \frac{1}{\lambda}} dt
\le \frac{ 2(1-\lambda)}{1+\lambda}(\frac{1}{ \log \frac{1}{\lambda} })^{p+1} \Gamma(p).
\]
It follows that
\[
[\E |X|^p]^{1/p} < 2^{1/p} (\frac{1}{ \log \frac{1}{\lambda} })^{1+1/p} p < 2p \frac{1}{ \log \frac{1}{\lambda} }
\]
\end{proof}

\subsubsection{Convergence rate for the Newton iterative sequence}

Recall that the definition of $F(\theta)$ is
\renewcommand{\arraystretch}{1.2}
\begin{equation}\label{eq:F-DP}
\large
\begin{array}{lll}
F_i( \theta ) &  =  & z_i^{+}-\sum^{n}_{j=1,j\neq i}\frac{\sum^{q-1}_{k=0}ke^{k({\alpha}_{i}+{\beta}_{j})}}{\sum^{q-1}_{k=0}
e^{k({\alpha}_{i}+{\beta}_{j})}}, ~~~  i=1, \ldots, n, \\
F_{n+j}( \theta ) & = & z_j^{-}-\sum^{n}_{i=1,i\neq j}\frac{\sum^{q-1}_{k=0}k e^{k({\alpha}_{i}+{\beta}_{j})}}{\sum^{q-1}_{k=0}e^{k({\alpha}_{i}+{\beta}_{j})}},  ~~~  j=1, \ldots, n, \\
F( \theta ) & = & (F_1( \theta ), \ldots, F_{2n-1}( \theta ))^\top.
\end{array}
\end{equation}

For the ad hoc system of equations \eqref{eq:F-DP}, \cite{Yan2016asymptotics} established a geometric convergence of rate for
the Newton iterative sequence.

\begin{theorem}[Theorem 7 in \cite{Yan2016asymptotics}]\label{theorem:Newton:converg}
Define a system of equations:
\begin{eqnarray*}
F_i(\theta) = d_i - \sum\limits_{k=1, k\neq i}^n f(\alpha_i + \beta_k),~~i=1, \ldots, n,
\\
F_{n+j}(\theta) = b_j - \sum\limits_{k=1, k\neq j}^n f(\alpha_k + \beta_j),~~j=1, \ldots, n-1,
\\
F(\theta) = ( F_1(\theta), \ldots, F_n(\theta), F_{n+1}(\theta), \ldots, F_{2n-1}(\theta))^\top,
\end{eqnarray*}
where $f(\cdot)$ is a continuous function with the third derivative.  Let $D\subset \R^{2n-1}$ be a convex set and assume for any $\mathbf{x}, \mathbf{y}, \mathbf{v}\in D$, we have
\begin{eqnarray}
\label{Newton-condition-a}
\|[F'(\mathbf{x}) - F'(\mathbf{y})]\mathbf{v}\|_\infty \le K_1 \|\mathbf{x} - \mathbf{y}\|_\infty \|\mathbf{v}\|_\infty, \\
\label{Newton-condition-b}
\max_{i=1,\ldots,2n-1} \|F_i'(\mathbf{x}) - F_i'(\mathbf{y})\|_\infty \le K_2 \| \mathbf{x} - \mathbf{y}\|_\infty,
\end{eqnarray}
where $F'(\theta)$ is the Jacobin matrix of $F$ on $\theta$ and $F_i'(\theta)$ is the gradient function of $F_i$ on $\theta$.
Consider $\theta^{(0)}\in D$ with $\Omega(\theta^{(0)}, 2r) \subset D$,
where $r=\| [F'(\theta^{(0)})]^{-1}F(\theta^{(0)}) \|_\infty$.
For any $\theta\in \Omega(\theta^{(0)}, 2r)$, we assume
\begin{equation}\label{Newton-condition-c}
F'(\theta) \in\mathcal{ L}_n(m, M) \mbox{~~~or~~} -F'(\theta) \in\mathcal{ L}_n(m, M).
\end{equation}
For $k=1, 2, \ldots$, define the Newton iterates $\theta^{(k+1)} = \theta^{(k)} - [ F'(\theta^{(k)})]^{-1} F(\theta^{(k)})$.
Let
\begin{equation}\label{definition-Newton-rho}
\rho = \frac{ c_1(2n-1)M^2K_1}{ 2m^3n^2 } + \frac{ K_2 }{ (n-1)m}.
\end{equation}
If $\rho r < 1/2$, then $\theta^{(k)}\in \Omega(\theta^{(0)}, 2r)$, $k=1, 2, \ldots$, are well-defined and satisfy
\begin{equation}\label{Newton-convergence-rate}
\| \theta^{(k+1)} - \theta^{(0)} \|_\infty \le r/(1-\rho r).
\end{equation}
Further, $\lim_{k\to\infty} \theta^{(k)}$ exists and the limiting point is precisely the solution of $F(\theta)=0$
in the range of $\theta \in \Omega(\theta^{(0)}, 2r)$.
\end{theorem}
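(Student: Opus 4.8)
The plan is to run the classical Newton--Kantorovich argument in the $\ell_\infty$ norm; the only real twist is that the Jacobian is not invertible in closed form and must be handled through the approximate inverse $S$ of \eqref{definition:S}. Write $\Delta_k := \theta^{(k)} - \theta^{(k-1)}$. By construction $F'(\theta^{(k-1)})\Delta_k = -F(\theta^{(k-1)})$, hence
\[
F(\theta^{(k)}) \;=\; F(\theta^{(k)}) - F(\theta^{(k-1)}) - F'(\theta^{(k-1)})\Delta_k \;=\; \int_0^1 \bigl[\,F'(\theta^{(k-1)} + t\Delta_k) - F'(\theta^{(k-1)})\,\bigr]\Delta_k\,dt .
\]
The core of the induction step is this: as long as $\theta^{(0)},\dots,\theta^{(k)}$ all lie in $\Omega(\theta^{(0)},2r)$ (so that the hypotheses apply), one has the quadratic estimate $\|\Delta_{k+1}\|_\infty \le \rho\,\|\Delta_k\|_\infty^2$ with $\rho$ as in \eqref{definition-Newton-rho}.

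To obtain it, write $\Delta_{k+1} = -[F'(\theta^{(k)})]^{-1}F(\theta^{(k)})$ and split $[F'(\theta^{(k)})]^{-1} = S_k + R_k$, where $S_k$ is the matrix \eqref{definition:S} built from $V = F'(\theta^{(k)})$ (or $-F'(\theta^{(k)})$, whichever lies in $\mathcal{L}_n(m,M)$ by \eqref{Newton-condition-c}). Two inputs feed in. First, the linear-algebraic estimate of \cite{Yan2016asymptotics}: every $V \in \mathcal{L}_n(m,M)$ is invertible with $\|V^{-1} - S\|_{\max} \le c_1 M^2/(m^3 n^2)$, which bounds $\|R_k\|_{\max}$, while $S_k$ itself is dominated by its diagonal part and has $v_{i,i}\ge (n-1)m$, so $S_k$ acts on $\ell_\infty$ with norm $O(1/((n-1)m))$. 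Second, plugging the Lipschitz hypotheses \eqref{Newton-condition-a}--\eqref{Newton-condition-b} into the integral formula above yields the quadratic bounds $\|F(\theta^{(k)})\|_\infty = O(K_1\|\Delta_k\|_\infty^2)$ and $\max_i |F_i(\theta^{(k)})| = O(K_2\|\Delta_k\|_\infty^2)$. Then $\|R_k F(\theta^{(k)})\|_\infty \le \|R_k\|_{\max}(2n-1)\|F(\theta^{(k)})\|_\infty$ produces the first term of $\rho$, and $\|S_k F(\theta^{(k)})\|_\infty$, handled through the diagonal-dominated structure of $S_k$, produces the second; adding them gives $\|\Delta_{k+1}\|_\infty \le \rho\|\Delta_k\|_\infty^2$.

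The rest is bookkeeping. Since $\|\Delta_1\|_\infty = \|[F'(\theta^{(0)})]^{-1}F(\theta^{(0)})\|_\infty = r$ and $\rho r < 1/2$, induction on $k$ gives $\|\Delta_{k+1}\|_\infty \le (\rho r)^{2^k-1}r \le (\rho r)^k r$; telescoping then yields $\|\theta^{(k)} - \theta^{(0)}\|_\infty \le \sum_{j=1}^{k}\|\Delta_j\|_\infty \le r\sum_{j\ge 0}(\rho r)^j = r/(1-\rho r) < 2r$, so every iterate remains inside $\Omega(\theta^{(0)},2r)$ (which keeps \eqref{Newton-condition-a}--\eqref{Newton-condition-c} in force, closing the induction) and \eqref{Newton-convergence-rate} holds. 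Moreover $\|\Delta_{k+1}\|_\infty \le (\rho r)^k r \to 0$, so $\{\theta^{(k)}\}$ is Cauchy and converges to some $\theta^{(\infty)} \in \Omega(\theta^{(0)},2r)$; from $F(\theta^{(k)}) = -F'(\theta^{(k)})\Delta_{k+1}$, the uniform boundedness of $F'(\theta)$ for $\theta\in\Omega(\theta^{(0)},2r)$, and $\|\Delta_{k+1}\|_\infty \to 0$, we get $F(\theta^{(k)}) \to 0$, whence $F(\theta^{(\infty)}) = 0$ by continuity.

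The main obstacle is the linear-algebraic input: proving the sharp bound $\|V^{-1} - S\|_{\max} = O(M^2/(m^3 n^2))$ for $V \in \mathcal{L}_n(m,M)$, and then tracking it --- together with the two distinct Lipschitz constants and the structure of $S$ --- precisely enough to land on exactly the two-term expression \eqref{definition-Newton-rho} for $\rho$ rather than a cruder bound. Everything else, namely the Taylor estimates and the geometric-series summation, is routine.
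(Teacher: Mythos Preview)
The paper does not give its own proof of this statement: it is quoted verbatim as ``Theorem~7 in \cite{Yan2016asymptotics}'' and used as a black-box preliminary in Section~\ref{section:Preliminaries}. There is therefore nothing in the present paper to compare your argument against.

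That said, your outline is the standard Newton--Kantorovich scheme adapted to the $\ell_\infty$ norm via the approximate inverse $S$, which is indeed the natural (and presumably the original) route. One small wrinkle in your write-up: the two ``quadratic bounds'' you list, $\|F(\theta^{(k)})\|_\infty = O(K_1\|\Delta_k\|_\infty^2)$ and $\max_i|F_i(\theta^{(k)})| = O(K_2\|\Delta_k\|_\infty^2)$, are literally the same quantity, so the two conditions \eqref{Newton-condition-a}--\eqref{Newton-condition-b} cannot be entering the argument in the way you describe. The correct split is structural: when you expand $[S_k F(\theta^{(k)})]_i$, the diagonal part $F_i(\theta^{(k)})/v_{i,i}$ uses the componentwise bound from \eqref{Newton-condition-b}, while the rank-one part $\pm v_{2n,2n}^{-1}\bigl(\sum_{j\le n}F_j - \sum_{j>n}F_{j}\bigr)$ and the remainder $R_kF(\theta^{(k)})$ both need the full-vector bound from \eqref{Newton-condition-a}. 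Getting the constants in \eqref{definition-Newton-rho} to line up requires keeping these roles straight; otherwise your plan is sound.
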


\subsubsection{Approximate inverse for the matrix $V$}

To quantify the accuracy of using $S$ to approximate $V$, we define the matrix maximum norm $\|\cdot\|$ for a general matrix $A=(a_{i,j})$ by $\|A\|:=\max_{i,j} |a_{i,j}|$.
The upper bound of the approximation error is given below.

\begin{proposition}[Proposition 1 in \cite{Yan2016asymptotics}] \label{pro:inverse:appro}
If $V\in \mathcal{L}_n(m, M)$ with $M/m=o(n)$, then for large enough $n$,
$$\| V^{-1}-S \| \le \frac{c_1M^2}{m^3(n-1)^2}.$$
where $c_1$ is a constant that does not depend on $M$, $m$ and $n$.
\end{proposition}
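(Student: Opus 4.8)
The plan is to reduce, by a sign change, to an entrywise estimate for the inverse of a grounded graph Laplacian, and then run the standard ``diagonal plus rank-one'' approximation argument. Put $P=\diag(s)$ with $s=(\mathbf 1_n^\top,-\mathbf 1_{n-1}^\top)^\top$ and set $\bar V=PVP$, $\bar S=PSP$; since $P$ is a sign matrix, $\|V^{-1}-S\|=\|\bar V^{-1}-\bar S\|$, so it is enough to bound the latter. Writing $A=V-\diag(v_{i,i})$, the conditions \eqref{eq:LmM} show that $\bar V=\diag(v_{i,i})-A$ is symmetric, irreducible, weakly diagonally dominant (strictly in rows $1,\dots,n-1$), with nonpositive off-diagonal entries supported on the $\alpha$--$\beta$ blocks -- equivalently, $\bar V$ is the Laplacian of a connected weighted bipartite graph $\mathcal G$ on $2n$ vertices, the $2n$-th being the vertex ``grounded'' by $\beta_n=0$, with all edge weights in $[m,M]$, with $v_{i,i}$ the $\mathcal G$-degree of vertex $i$ and $v_{2n,2n}$ that of the grounded vertex; moreover $v_{i,i}\ge(n-1)m$ and $v_{2n,2n}\in[(n-1)m,(n-1)M]$. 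Because $Ps=\mathbf 1_{2n-1}$, one gets $\bar S=\diag(v_{i,i}^{-1})+v_{2n,2n}^{-1}\mathbf 1\mathbf 1^\top$, the classical approximate inverse of a grounded Laplacian.

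From $\bar V\bar S=I-\bar R$ one has $\bar V^{-1}-\bar S=\bar V^{-1}\bar R$, and a direct block computation gives $\bar R=A\diag(v_{i,i})^{-1}-v_{2n,2n}^{-1}w\mathbf 1^\top$, with $w$ the vector of weights from the ungrounded vertices to the grounded one; hence every entry of $\bar R$ is $O\big(M/((n-1)m)\big)$. Weak diagonal dominance plus irreducibility (or, more cleanly, comparing the effective resistances of $\mathcal G$ with those of the complete bipartite graph, all $\Theta(1/((n-1)m))$ since $\mathcal G$ is dense with weights $\ge m$) gives $\|\bar V^{-1}\|=O(1/((n-1)m))$, with $\bar V^{-1}\ge 0$ entrywise. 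The naive estimate $\|\bar V^{-1}\bar R\|\le(2n-1)\|\bar V^{-1}\|\,\|\bar R\|=O\big(M/((n-1)m^2)\big)$ is a full factor of $n-1$ larger than the target, so the heart of the matter is recovering that factor.

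I would recover it via the two-scale spectral structure of $\bar V$. Write $\bar V=D^{1/2}(I-\widehat N)D^{1/2}$ with $D=\diag(v_{i,i})$ and $\widehat N=D^{-1/2}AD^{-1/2}$ the normalized adjacency of $\mathcal G$, so $\bar V^{-1}=D^{-1/2}(I-\widehat N)^{-1}D^{-1/2}$; here $\widehat N\succeq 0$, $\|\widehat N\|_{\mathrm{op}}<1$ by $\bar V\succ 0$, and (as $\mathcal G$ is bipartite) the spectrum of $\widehat N$ is symmetric with extreme eigenvalues $\pm\lambda_1$, $\lambda_1=1-v_{2n,2n}/\mathrm{tr}(D)+O(M/((n-1)m))$ near $1$, whose eigenvectors are close to the normalized $\psi\propto D^{1/2}\mathbf 1$ and $\phi\propto D^{1/2}(\mathbf 1_{P_1}^\top,-\mathbf 1_{P_2}^\top)^\top$. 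Deflating these two directions, $(I-\widehat N)^{-1}=I+\tfrac{\lambda_1}{1-\lambda_1}\psi\psi^\top-\tfrac{\lambda_1}{1+\lambda_1}\phi\phi^\top+\widehat N''(I-\widehat N'')^{-1}$; conjugating by $D^{-1/2}$, the identity reproduces $D^{-1}$, the $\psi$-term reproduces $v_{2n,2n}^{-1}\mathbf 1\mathbf 1^\top$ up to entrywise $O(1/\mathrm{tr}(D))=O(1/(n^2m))$, the $\phi$-term (coefficient $\approx-\tfrac12$, delocalized eigenvector) is entrywise $O(1/\mathrm{tr}(D))=O(1/(n^2m))$, and -- granting the key bulk estimate that $\widehat N''(I-\widehat N'')^{-1}$ has all entries $O\big(M/((n-1)m)\big)$ -- the remainder $D^{-1/2}\widehat N''(I-\widehat N'')^{-1}D^{-1/2}$ is entrywise $O\big(M/((n-1)^2m^2)\big)$. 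Combining the four pieces and using $M\ge m$ to dominate both $1/(n^2m)$ and $M/((n-1)^2m^2)$ by $M^2/(m^3(n-1)^2)$ yields the claim with an absolute $c_1$.

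The main obstacle is exactly that bulk estimate, i.e.\ controlling the non-extremal part $\widehat N''$ of the bipartite normalized adjacency of $\mathcal G$: a naive $\|\widehat N''\|_{\mathrm{op}}\le(2n-1)\|\widehat N''\|$ gives only $O(M/m)$, so the extra $1/(n-1)$ must be extracted by hand. I would write $v_{i,j}=\tfrac{M+m}{2}+r_{i,j}$ with $|r_{i,j}|\le\tfrac{M-m}{2}$, handle the constant part exactly (rank one after normalization, up to the missing ``matching'' edges $v_{i,n+i}=0$, which perturb the operator norm only by $O(1)$), and bound the residual part $(r_{i,j})$ by a quadratic-form estimate on the orthogonal complement of the two deflated directions, together with the a priori bound $\|\widehat N\|_{\mathrm{op}}<1$ forced by $\bar V\succ 0$; this is where the hypothesis $M/m=o(n)$ is consumed. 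The delicate points are the interplay of the \emph{two} near-extremal directions of the bipartite normalized adjacency (of which $\bar S$ corrects only one, the other being small enough to absorb into the error) and propagating the errors from ``$\psi,\phi$ exact eigenvectors'' to ``$\psi,\phi$ approximate eigenvectors''; the remaining bookkeeping -- the size bounds on $v_{i,i}$ and $v_{2n,2n}$, the entrywise estimate of $\bar V^{-1}$, and the conjugations -- is routine.
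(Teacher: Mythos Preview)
First, note that this paper does not supply a proof: Proposition~\ref{pro:inverse:appro} is quoted from \cite{Yan2016asymptotics} and invoked as a black box, so there is no in-paper argument to compare against. The proof in \cite{Yan2016asymptotics} (extending the $\beta$-model analogue) is not spectral; it analyses $W:=V^{-1}-S$ entrywise through the linear system $VW=I-VS$, computing $I-VS$ explicitly from the relations \eqref{eq:LmM} and then exploiting the block-bipartite combinatorics and row-sum constraints of $V$ to recover the extra factor $1/(n-1)$ by cancellation, with no eigenvalue input. That route is elementary and self-contained; yours, if it closed, would be more conceptual, since it identifies $S$ as the deflation of the near-null direction of the grounded Laplacian.

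Your sketch, however, has a genuine gap precisely where you flag the ``main obstacle.'' To upgrade the naive $O\big(M/((n-1)m^2)\big)$ to $O\big(M^2/((n-1)^2m^3)\big)$ you need the entrywise bound on $\widehat N''(I-\widehat N'')^{-1}$, and for that you need a \emph{uniform bulk spectral gap}: after peeling off $\pm\lambda_1$, the rest of the spectrum of $\widehat N$ must stay bounded away from $\pm 1$ by an amount you can afford. The decomposition $v_{i,j}=(M+m)/2+r_{i,j}$ does not deliver this: the normalized residual block has operator norm of order $(M-m)/m$, which under the sole hypothesis $M/m=o(n)$ can be arbitrarily large, and the a priori fact $\|\widehat N\|_{\mathrm{op}}<1$ forced by $\bar V\succ 0$ gives only $\|\widehat N''\|_{\mathrm{op}}<1$, not $\le 1-\delta$ for a fixed $\delta>0$, so $(I-\widehat N'')^{-1}$ is uncontrolled. (A small side slip: you write $\widehat N\succeq 0$, yet immediately---correctly---observe its spectrum is symmetric about $0$; a nontrivial bipartite normalized adjacency is never positive semidefinite.) Without an independent argument for the bulk gap, which in this generality is essentially as hard as the proposition itself, the deflation strategy stalls exactly where the missing factor of $n-1$ has to be won.
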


\subsection{Proofs for Theorem 1}
\label{subsection:proofcon}
We will use the Newton method to prove the consistency by applying Theorem \ref{theorem:Newton:converg} to obtain the geometrical convergence rate of the Newton iterative sequence.
To achieve it, we verify the conditions in Theorem \ref{theorem:Newton:converg}.
Let $F'(\theta)$ be the Jacobian matrix of $F$ defined at \eqref{eq:F-DP} on $\theta$ and $F_i'(\theta)$ is the gradient function of $F_i$ on $\theta$.
The first condition is the Lipchitz continuous property on $F'(\theta)$ and $F_i'(\theta)$.
Note that the Jacobian matrix of $F^\prime (\theta)$ does not
depend on $z$. By Lemma 4 in \cite{Zhang2016Directed}, we have that
\begin{eqnarray}
\label{Newton-condition-a}
\|[F'(x) - F'(y)]v\|_\infty \le K_1 \|x - y\|_\infty \|v\|_\infty, \\
\label{Newton-condition-b}
\max_{i=1,\ldots,2n-1} \|F_i'(x) - F_i'(y)\|_\infty \le K_2 \| x - y\|_\infty,
\end{eqnarray}
where $K_1=4(n-1)(q-1)^3$ and $K_2=2(n-1)(q-1)^3$.
This verifies the first condition. The second condition is that the upper bound of $\| F(\theta^*)\|$ is in the order of $(n\log n)^{1/2}$, stated in the below lemma.

\begin{lemma}
\label{Lemma-binary-2}
Let $\kappa=2(q-1)(-\log \lambda)^{-1}=4(q-1)^2/\epsilon$, where $\lambda\in(0,1)$. The following holds:
\begin{equation}\label{eq:con4}
\max\{ \max_i|z_i^+ - \E (d_i^+) |, \max_j | z_j^- - \E( d_j^-)| \} = O_p( \sqrt{n\log n } + \kappa\sqrt{\log n} ).
\end{equation}
\end{lemma}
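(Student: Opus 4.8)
The plan is to view $z_i^+-\E(d_i^+)$ as a sum of two independent pieces living on very different scales, and to bound each separately. Writing the moment equations at the true parameter, for every $i$
\[
z_i^+-\E(d_i^+)\;=\;\sum_{j\neq i}\bigl(a_{i,j}-\E a_{i,j}\bigr)\;+\;e_i^+,
\]
and symmetrically $z_j^--\E(d_j^-)=\sum_{i\neq j}(a_{i,j}-\E a_{i,j})+e_j^-$. For fixed $i$ the weights $\{a_{i,j}\}_{j\neq i}$ are mutually independent and take values in $\{0,\dots,q-1\}$, so each centered term $a_{i,j}-\E a_{i,j}$ is bounded by $q-1$ and, by the crude bound $[\E|a_{i,j}|^p]^{1/p}\le q-1\le(q-1)p$ for $p\ge1$ together with the centering remark following Theorem~\ref{theorem:concen:inq}, is sub-exponential with the constant parameter $2(q-1)$; meanwhile, by Lemma~\ref{lemma:subexp:laplace}, $e_i^+$ is sub-exponential with parameter of order $\kappa$.

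For the ``degree'' piece I would apply Theorem~\ref{theorem:concen:inq} to the $n-1$ centered, bounded, sub-exponential summands $a_{i,j}-\E a_{i,j}$: with threshold $t=C\sqrt{(\log n)/(n-1)}$ for a large enough constant $C=C(q,\gamma)$ (so that the quadratic branch of the minimum in Theorem~\ref{theorem:concen:inq} is the active one for large $n$), the inequality gives
\[
\P\Bigl(\,\bigl|d_i^+-\E d_i^+\bigr|\ \ge\ C\sqrt{(n-1)\log n}\,\Bigr)\ \le\ 2\exp\!\bigl[-\gamma(n-1)\,t^2/(2(q-1))^2\bigr]\ \le\ 2n^{-3}.
\]
(One could equally invoke Hoeffding's inequality here, since the summands are bounded.) A union bound over the $2n$ quantities $d_1^+,\dots,d_n^+,d_1^-,\dots,d_n^-$ then shows $\max\{\max_i|d_i^+-\E d_i^+|,\ \max_j|d_j^--\E d_j^-|\}=O_p(\sqrt{n\log n})$.

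For the ``noise'' piece I would bound $\max_i|e_i^+|$ and $\max_j|e_j^-|$ by applying the tail estimate underlying Theorem~\ref{theorem:concen:inq} to a single sub-exponential variable (or, equivalently, by using the explicit geometric tail $\P(|e_i^+|\ge t)=O(\lambda^{t})$ of the discrete Laplace law): for $t$ a suitable constant multiple of $\kappa\sqrt{\log n}$ one gets $\P(|e_i^+|\ge t)\le n^{-3}$, and a union bound over the $2n$ noise variables yields $\max\{\max_i|e_i^+|,\ \max_j|e_j^-|\}=O_p(\kappa\sqrt{\log n})$. Adding the two bounds by the triangle inequality gives \eqref{eq:con4}; note that under the hypothesis of Theorem~\ref{Theorem:con} the $\kappa$-dependent term is in any case dominated by $\sqrt{n\log n}$.

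The argument is essentially bookkeeping, and I do not expect a genuine obstacle; the one point requiring care is to treat the two sources of randomness on their own scales — the bounded edge weights concentrate at the $\sqrt{n\log n}$ rate because each degree is a sum of $n-1$ bounded terms (sub-Gaussian-type behaviour), whereas a single discrete Laplace variable has genuinely exponential tails and so contributes the separate $\kappa$-term — rather than trying to fold $e_i^+$ into the degree sum, which would spoil the rate when $\kappa$ is large.
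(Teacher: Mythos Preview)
Your proposal is correct and follows essentially the same route as the paper: split $z_i^+-\E d_i^+$ into the centered degree $d_i^+-\E d_i^+$ plus the Laplace noise $e_i^+$, control the first piece by a concentration/Hoeffding bound for sums of bounded independent variables and the second via the sub-exponential tail of Lemma~\ref{lemma:subexp:laplace}, then take a union bound over $i$ and use the triangle inequality. The only cosmetic difference is that the paper quotes Lemma~3 of \cite{Yan2016asymptotics} for the degree concentration $\max_i|d_i^+-\E d_i^+|\le (q-1)\sqrt{(n-1)\log(n-1)}$ rather than re-deriving it from Theorem~\ref{theorem:concen:inq} as you do.
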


\begin{proof}
Note that $\{e_i^+\}_{i=1}^n$ and $\{ e_i^- \}_{i=1}^n$ are independently discrete Laplace random variables and sub-exponential with the same parameter $\kappa$ by Lemma \ref{lemma:subexp:laplace}.
By the concentration inequality in Theorem \ref{theorem:concen:inq}, 
we have
\begin{equation}\label{eq:maxe}
\P( \max_{i=1, \ldots, n} |e_i^+| \ge 2\kappa \sqrt{\frac{\log n}{\gamma}} ) \le \sum_i \P( |e_i^+| \ge 2\kappa \sqrt{\frac{\log n}{\gamma}} ) \le n\times e^{-2\log n} = \frac{1}{n}
\end{equation}
and
\begin{equation}\label{eq:sum:error}
\P( |\sum_{i=1}^n e_i^+| \ge 2\kappa \sqrt{\frac{n\log n}{\gamma}} ) \le 2 \exp( - \frac{\gamma}{n} \times \frac{n\log n}{\gamma} ) = \frac{2}{n},
\end{equation}
where $\gamma$ is an absolute constant appearing in the concentration inequality.
In Lemma 3 in \cite{Yan2016asymptotics}, they show that with probability at least $1-4n/(n-1)^2$,
\begin{equation}\label{eq:con3}
\max\{ \max_i|d_i^+ - \E (d_i^+) |, \max_j | d_j^- - \E( d_j^-)| \} \le (q-1)\sqrt{(n-1)\log (n-1) }.
\end{equation}
So, with probability at least $1-4n/(n-1)^2-2/n$, we have
\[
\max_{i=1, \ldots, n} |z_i^+ - \E (d_i^+) | \le \max_i|d_i^+ - \E (d_i^+) | + \max_i |e_i^+| \le (q-1)\sqrt{n\log n } + 2\kappa \sqrt{\frac{\log n}{\gamma}} .
\]
Similarly, with probability at least $1-4n/(n-1)^2-2/n$, we have
\[
\max_{i=1, \ldots, n} |z_i^- - \E (d_i^-) | \le (q-1)\sqrt{n\log n } + 2\kappa  \sqrt{\frac{\log n}{\gamma}} .
\]
Let $A$ and $B$ be the events:
\begin{equation*}
\begin{array}{rcl}
A & = & \{ \max_{i=1, \ldots, n} |z_i^+ - \E (d_i^+) | \le (q-1)\sqrt{n\log n } + 2\kappa  \sqrt{\frac{\log n}{\gamma}} \}, \\
B & = & \{ \max_{i=1, \ldots, n} |z_i^- - \E (d_i^-) | \le (q-1)\sqrt{n\log n } + 2\kappa  \sqrt{\frac{\log n}{\gamma}} \}.
\end{array}
\end{equation*}
Consequently, as $n$ goes to infinity, we have
\begin{eqnarray*}
\P(A\bigcap B) \ge 1 - \P(A^c) - \P(B^c) \ge 1- 8n/(n-1)^2-4/n  \to 1.
\end{eqnarray*}
This completes the proof.
\end{proof}

It can be easily checked that $-F'(\theta)\in \mathcal{L}_n(m, M)$, \cite{Zhang2016Directed} where $M=(q-1)^2/2$ and $m= 1/2( 1+ e^{ 2\|\theta\|_\infty })^2$.
We are now ready to present the proof of Theorem 1. 

\begin{proof}[Proof of Theorem 1] 
Assume that equation \eqref{eq:con4} holds.
In the Newton iterates, we choose $\theta^*$ as the initial value $\theta^{(0)}$.
If $\theta \in \Omega(\theta^*, 2r)$, then $-F'(\theta)\in \mathcal{L}_n(m, M)$ with
\begin{equation}
\label{eq:Mm}
M=\frac{(q-1)^2}{2},~~m=\frac{ 1 }{ 2( 1 + e^{2(\|\theta^*\|_\infty+2r) } )^2 }.
\end{equation}
To apply Theorem \ref{theorem:Newton:converg}, we need to calculate $r$ and $\rho r$ in this theorem.
Let
\[
\tilde{F}_{2n}(\theta^*)= =\sum_{i=1}^{n}F_{i}(\theta^*)-\sum_{i=1}^{n-1}F_{n+i}(\theta^*)=d_n^{-}-\sum_{i=1}^{n-1}\frac{\displaystyle\sum\nolimits^{q-1}_{k=0}k e^{k({\alpha}_{i}+{\beta}_{n})}}{\displaystyle\sum\nolimits^{q-1}_{k=0}e^{k({\alpha}_{i}+{\beta}_{n})}}+\sum_{i=1}^{n}e_{i}^{+}-\sum_{i=1}^{n-1}e_{i}^{-}.
\]

By \eqref{eq:sum:error} and \eqref{eq:con3}, we have
\[
|\tilde{F}_{2n}(\theta^*)| = O_p( (1+\kappa )\sqrt{n\log n} ).
\]
By Proposition \ref{pro:inverse:appro}, we have
\begin{eqnarray*}
r& = &\parallel[F^{'} \theta^*]^{-1}F( \theta^*)\parallel_{\infty}\\
\vspace{0.3em}\\
& \leq & (2n-1)\|V^{-1} - S \| \| F(\theta^*) \|_\infty + \max_{i=1,\dots,2n-1}\frac{\mid F_{i}( \theta^*)\mid}{v_{i,i}}  +  \frac{\mid F_{2n}(\theta^*)\mid}{v_{2n,2n}}\\
\vspace{0.3em}\\
& \leq &\displaystyle\left(\frac{c_1(2n-1)\frac{1}{4}(q-1)^4}{\frac{1}{8(1+e^{2\parallel\theta^{*}\parallel_{\infty}})^3}(n-1)^2}
\right) \left( (q-1)\sqrt{(n-1)\log(n-1)}+\kappa\sqrt{\log n } \right)\\
\vspace{0.3em}\\
&&+\frac{2(1+e^{2\theta^*\parallel_{\infty}})}{n-1}  \left(  (1+\kappa)\sqrt{n\log n}+(q-1)\sqrt{(n-1)\log(n-1)}+\kappa\sqrt{\log n}  \right)\\
\vspace{0.3em}\\
& = & O_p \left( \frac{(\log n)^{1/2}}{n^{1/2}}(1+\kappa) e^{6\parallel\theta^{*}\parallel_{\infty}}  \right).
\end{eqnarray*}

Note that if $(1+\kappa) e^{6\|\theta^*\|_\infty } = o( (n/\log n)^{1/2} )$, then $r=o(1)$.
By \eqref{Newton-condition-a}, \eqref{Newton-condition-b} and \eqref{eq:Mm}, we have
\begin{eqnarray*}
\rho  =  \frac{ c_1(2n-1)M^24(n-1)(q-1)^3}{2m^3n^2} + \frac{ 2(n-1) }{ m(n-1)(q-1)^3 } = O( e^{6\|\theta^*\|_\infty} )
\end{eqnarray*}
Therefore, if $(1+\kappa) e^{12\|\theta^*\|_\infty } = o( (n/\log n)^{1/2} )$, then $\rho r\to 0$ as $n\to \infty$.
Consequently, by Theorem \ref{theorem:Newton:converg}, $\lim_{n\to \infty }\widehat{\theta}^{(n)}$ exists.
Denote the limiting point as $\widehat{\theta}$, then it satisfies
\begin{eqnarray*}
\|\widehat{\theta} - \theta^*\|_\infty  \le  2r = O\left(\frac{ (1+\kappa)(\log n)^{1/2}e^{6\|\theta^*\|_\infty} }{ n^{1/2} } \right) = o(1).
\end{eqnarray*}
By Lemma \ref{Lemma-binary-2}, equation \eqref{eq:con4} holds with probability approaching one such that
the above inequality also holds with probability approaching one.
The uniqueness of the solution to \eqref{eq:F-DP} is due to that $-F'(\theta)$ is positive definite.
\end{proof}

\subsection{Proofs for Theorem 2}  
\label{subsection:proofasymp}
The method of the proofs for the asymptotic normality of $\widehat{\theta}$ is similar to the method of the non-noisy case in \cite{Yan2016asymptotics}.
Wherein they work with the original bi-degree sequence $g$,
here we do with its noisy sequence $\tilde{g}$. The key step is to
represent $\hat{\theta} - \theta$ as the sum of $S(   \tilde{g} - \E g )$ and a remainder term.
For sake of clarity of exposition, we restate some results in \cite{Yan2016asymptotics} here.

\begin{lemma}[Lemma 8 \cite{Yan2016asymptotics}]\label{central:lemma1}
Let $R = V^{-1}-S$ and $U=\mbox{Cov}[R (g - \E g )]$. Then
\begin{equation}
\|U\|\le \|V^{-1}-S\|+\frac{6(q-1)^2(1+e^{2\|\bs{\theta}^*\|_\infty})^2}{(n-1)^2}.
\end{equation}
\end{lemma}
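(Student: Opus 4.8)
The plan is to derive an \emph{exact} formula for $U$ modulo a term that is itself of order $\|V^{-1}-S\|$, and then to use that $U$, being a covariance matrix, is positive semidefinite. \textbf{Step 1 (reduction to the diagonal).} Since $V$ is the Fisher information matrix of the exponential family \eqref{p0model}, it equals $\mathrm{Cov}(g)$ with $g=(d_1^+,\dots,d_n^+,d_1^-,\dots,d_{n-1}^-)^\top$ the sufficient statistic, and $R:=V^{-1}-S$ is symmetric; hence $U=\mathrm{Cov}[R(g-\E g)]=RVR$. Because $U\succeq 0$, its largest entry in modulus sits on the diagonal, so $\|U\|=\max_{1\le i\le 2n-1}U_{i,i}$ and it suffices to control $U_{i,i}$.

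\textbf{Step 2 (closed form for $U$).} I would expand $U=RVR=V^{-1}-2S+SVS$ and substitute the decomposition $S=D^{-1}+v_{2n,2n}^{-1}\,uu^\top$, which is \eqref{definition:S} in matrix form, where $D=\mathrm{diag}(v_{1,1},\dots,v_{2n-1,2n-1})$ and $u=(\mathbf{1}_n^\top,-\mathbf{1}_{n-1}^\top)^\top$. Two identities, both read off from the defining relations \eqref{eq:LmM} of $\mathcal{L}_n(m,M)$ together with the definitions of the augmented entries $v_{i,2n},v_{2n,2n}$, make the computation collapse: $Vu$ is supported on its first $n-1$ coordinates, with $(Vu)_i=v_{i,2n}$ there, and consequently $u^\top Vu=\sum_{i=1}^{n-1}v_{i,2n}=v_{2n,2n}$. (Probabilistically this is transparent: $u^\top g=d_n^-$ because $\sum_i d_i^+=\sum_i d_i^-$, so $Vu=\mathrm{Cov}(g,d_n^-)$ and $u^\top Vu=\mathrm{Var}(d_n^-)$.) Substituting these and $V^{-1}=S+R$, all the $uu^\top$-contributions cancel exactly and one is left with
\[
U = R + D^{-1}(V-D)D^{-1} + \frac{1}{v_{2n,2n}}\bigl(wu^\top+uw^\top\bigr), \qquad w:=D^{-1}Vu .
\]

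\textbf{Step 3 (read off the diagonal and bound).} The matrix $D^{-1}(V-D)D^{-1}$ has zero diagonal, and $w_i=v_{i,2n}/v_{i,i}$ for $i\le n-1$ while $w_i=0$ for $i\ge n$; hence $U_{i,i}=R_{i,i}+2v_{i,2n}/(v_{i,i}v_{2n,2n})$ for $i\le n-1$ and $U_{i,i}=R_{i,i}$ otherwise. Therefore
\[
\|U\| = \max_i U_{i,i} \le \|R\| + \max_{1\le i\le n-1}\frac{2v_{i,2n}}{v_{i,i}\,v_{2n,2n}} .
\]
Plugging in the entry bounds recalled before Theorem~\ref{Theorem:binary:central} — namely $v_{i,2n}\le (q-1)^2/2$ and $v_{i,i},v_{2n,2n}\ge (n-1)/(2(1+e^{2\|\theta^*\|_\infty}))$ — the maximum is at most $4(q-1)^2(1+e^{2\|\theta^*\|_\infty})^2/(n-1)^2\le 6(q-1)^2(1+e^{2\|\theta^*\|_\infty})^2/(n-1)^2$, and since $\|R\|=\|V^{-1}-S\|$ this is exactly the asserted inequality.

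\textbf{Main obstacle.} The crux is the exact cancellation of the $uu^\top$ terms in Step 2: a crude estimate there would leave a term of order $v_{2n,2n}^{-1}=O(1/n)$, which dwarfs the target $O(1/n^2)$, so the whole argument turns on getting it to vanish. That in turn rests on the two identities $u^\top Vu=v_{2n,2n}$ and $Vu=(v_{1,2n},\dots,v_{n-1,2n},0,\dots,0)^\top$, which I would verify first and carefully from \eqref{eq:LmM}. Everything downstream is routine given the membership $V\in\mathcal{L}_n(m,M)$ (Proposition~\ref{pro:inverse:appro} is not needed for the displayed inequality itself, only afterwards to conclude that its right-hand side is $o(1)$).
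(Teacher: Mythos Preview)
The paper does not actually prove this lemma: it is quoted verbatim as Lemma~8 of \cite{Yan2016asymptotics} and used as a black box in the proof of Theorem~\ref{Theorem:binary:central}. So there is nothing in the present paper to compare your argument against.

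That said, your argument is correct and self-contained. The key algebraic identity $U=RVR=R+D^{-1}(V-D)D^{-1}+v_{2n,2n}^{-1}(wu^\top+uw^\top)$ follows exactly as you outline, and the two structural facts $Vu=(v_{1,2n},\dots,v_{n-1,2n},0,\dots,0)^\top$ and $u^\top Vu=v_{2n,2n}$ are immediate from the defining relations \eqref{eq:LmM}; the probabilistic interpretation via $u^\top g=d_n^-$ is a nice sanity check. Reading off the diagonal and inserting $v_{i,2n}\le(q-1)^2/2$ together with the lower bound $v_{i,i},v_{2n,2n}\ge(n-1)/\bigl(2(1+e^{2\|\theta^*\|_\infty})\bigr)$ quoted before Theorem~\ref{Theorem:binary:central} gives $4(q-1)^2(1+e^{2\|\theta^*\|_\infty})^2/(n-1)^2$, comfortably below the constant $6$ in the statement. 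Your observation that the PSD property of $U$ localizes $\|U\|$ to the diagonal is what makes the bookkeeping clean; without it one would also have to track the off-diagonal of $D^{-1}(V-D)D^{-1}$ and the cross terms, which is doable but messier. The ``main obstacle'' you flag---that a naive estimate of the $uu^\top$ contribution would leave an $O(1/n)$ term---is exactly the point, and your cancellation handles it.
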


\begin{lemma}\label{central:lemma2}
Let $\kappa=2(q-1)(-\log \lambda)^{-1}=4(q-1)^2\epsilon^{-1}$.
If $(1+\kappa)^2 e^{ 18\|\theta^*\|_\infty } = o( (n/\log n)^{1/2} )$, then
for any $i$,
\begin{equation}
\widehat{\theta}_i- \theta_i^* = [V^{-1} (\tilde{g} - \E g ) ]_i + o_p( n^{-1/2} ).
\end{equation}
\end{lemma}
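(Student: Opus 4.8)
The plan is to run a Taylor expansion of the estimating equations \eqref{eq:likelihood-DP} around the true parameter $\theta^*$ and to isolate the linear term $V^{-1}(\tilde g - \E g)$, controlling the quadratic remainder uniformly. Write $F(\theta)$ as in \eqref{eq:F-DP}, so that $F(\widehat\theta)=0$ and $F(\theta^*) = \tilde g - \E g$ (here $\tilde g$ denotes the noisy bi-degree vector $(z^+,z^-)$ truncated to its first $2n-1$ coordinates, and $\E g$ the vector of expected degrees). Since $-F'(\theta)$ is the Fisher information matrix $V_\theta$ and lies in $\mathcal L_n(m,M)$ with $m^{-1}=O(e^{2\|\theta^*\|_\infty})$, $M=(q-1)^2/2$ on a neighborhood of $\theta^*$, a second-order Taylor expansion of $F_i$ at $\theta^*$ gives, for each coordinate $i$,
\[
0 = F_i(\widehat\theta) = F_i(\theta^*) - [V_{\theta^*}(\widehat\theta-\theta^*)]_i + \tfrac12 (\widehat\theta-\theta^*)^\top [\partial^2 F_i(\bar\theta)] (\widehat\theta-\theta^*),
\]
for some $\bar\theta$ on the segment between $\theta^*$ and $\widehat\theta$. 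Rearranging,
\[
\widehat\theta - \theta^* = V_{\theta^*}^{-1}(\tilde g - \E g) + V_{\theta^*}^{-1} h, \qquad h_i := \tfrac12 (\widehat\theta-\theta^*)^\top [\partial^2 F_i(\bar\theta)] (\widehat\theta-\theta^*).
\]
So it remains to show $[V_{\theta^*}^{-1} h]_i = o_p(n^{-1/2})$ for each fixed $i$.

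To bound this remainder I would proceed as follows. First, from Theorem \ref{Theorem:con} (consistency) together with Lemma \ref{Lemma-binary-2}, under the stated condition $(1+\kappa)^2 e^{18\|\theta^*\|_\infty}=o((n/\log n)^{1/2})$ — which is stronger than the consistency condition — we have $\|\widehat\theta-\theta^*\|_\infty = O_p\big((\log n/n)^{1/2}(1+\kappa)e^{6\|\theta^*\|_\infty}\big)$. Second, the Lipschitz/boundedness estimate \eqref{Newton-condition-b}, i.e. the bound $K_2 = 2(n-1)(q-1)^3$ on the row-wise variation of $F_i'$, controls the Hessian: $\|\partial^2 F_i(\bar\theta)\|_{\infty\to 1}$-type bounds give $|h_i| \le \tfrac12 K_2 \|\widehat\theta-\theta^*\|_\infty^2 = O\big((n-1) \cdot \|\widehat\theta-\theta^*\|_\infty^2\big)$, hence $\|h\|_\infty = O_p\big( (\log n)(1+\kappa)^2 e^{12\|\theta^*\|_\infty} \big)$. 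Third, to pass from $h$ to $V_{\theta^*}^{-1}h$, use the approximate inverse $S$ of \eqref{definition:S} and Proposition \ref{pro:inverse:appro}: write $[V^{-1}h]_i = [Sh]_i + [(V^{-1}-S)h]_i$. The first term is $O(\|h\|_\infty / v_{i,i}) + O(\|h\|_\infty / v_{2n,2n})$, and since $v_{i,i}, v_{2n,2n} \gtrsim (n-1)/(1+e^{2\|\theta^*\|_\infty})$ this contributes $O_p\big( (\log n)(1+\kappa)^2 e^{14\|\theta^*\|_\infty}/n \big)$; the second term is at most $(2n-1)\|V^{-1}-S\|\,\|h\|_\infty = O(M^2 m^{-3} n^{-1}) \cdot \|h\|_\infty$, which is of smaller order. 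Combining, $[V^{-1}h]_i = O_p\big( n^{-1}(\log n)(1+\kappa)^2 e^{14\|\theta^*\|_\infty} \big)$, and one checks that the hypothesis $(1+\kappa)^2 e^{18\|\theta^*\|_\infty} = o((n/\log n)^{1/2})$ forces this to be $o_p(n^{-1/2})$ (the extra $e^{4\|\theta^*\|_\infty}$ margin absorbs the discrepancy between the $e^{14}$ appearing here and the $e^{18}$ in the hypothesis, with room to spare).

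The main obstacle is the bookkeeping of the exponential factors $e^{c\|\theta^*\|_\infty}$: one must track how each application of $V^{-1}$ or $S$ (which costs $m^{-1} \sim e^{2\|\theta^*\|_\infty}$, and cubed in Proposition \ref{pro:inverse:appro}) interacts with the squared consistency error (which already carries $e^{12\|\theta^*\|_\infty}$), and verify that the worst exponent produced is strictly below the $18$ allowed by the hypothesis so that the $o_p(n^{-1/2})$ conclusion is genuine and not merely borderline. A secondary technical point is justifying that $\bar\theta$ stays in the neighborhood $\Omega(\theta^*,2r)$ on which the $\mathcal L_n(m,M)$ membership and the Hessian bounds \eqref{Newton-condition-b} are valid; this follows from consistency since $\|\widehat\theta - \theta^*\|_\infty \le 2r = o(1)$, so for $n$ large the whole segment lies in any fixed-radius ball around $\theta^*$. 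Everything else is the routine second-order Taylor argument already used in \cite{Yan2016asymptotics}, with $g$ replaced by the noisy $\tilde g$ — and crucially the Hessian of $F$ does not depend on $z$, so no new randomness enters the remainder analysis beyond what the consistency bound already handles.
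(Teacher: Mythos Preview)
Your approach is correct and is exactly the second-order Taylor argument of Lemma~9 in \cite{Yan2016asymptotics} to which the paper's own proof simply defers, observing only that the Jacobian $F'(\theta)$ and Hessian do not depend on the data $z$, so the entire remainder analysis transfers verbatim from $g$ to $\tilde g$.

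One step in your sketch deserves more care, however. Your bound $[Sh]_i = O(\|h\|_\infty/v_{i,i}) + O(\|h\|_\infty/v_{2n,2n})$ is correct, but not for the naive reason. The second piece of $[Sh]_i$ is $v_{2n,2n}^{-1}\big(\sum_{j\le n}h_j - \sum_{j>n}h_j\big)$, a signed sum of $2n-1$ terms each of size $O(\|h\|_\infty)$; crudely this is $O(n\|h\|_\infty/v_{2n,2n}) = O(\|h\|_\infty/m)$, which carries no factor of $n^{-1}$ and would \emph{not} yield $o_p(n^{-1/2})$ under the stated hypothesis. What rescues the bound is a structural cancellation: writing the edge-level second-order remainder for the pair $(i,l)$ as $g_{il}$, one has $h_i = -\sum_{l\neq i} g_{il}$ for $i\le n$ and $h_{n+l} = -\sum_{i\neq l} g_{il}$ for $l<n$, so every term with $l<n$ cancels in the signed sum, leaving only $-\sum_{i\neq n} g_{in}$. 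This residual has just $n-1$ summands, giving $\big|\sum_{j\le n}h_j - \sum_{j>n}h_j\big| = O\big((n-1)\|\widehat\theta-\theta^*\|_\infty^2\big) = O(\|h\|_\infty)$, and your stated bound then follows. This cancellation (inherited from the fact that the model part of $\sum_i F_i - \sum_j F_{n+j}$ collapses to a single row) is the one genuinely nontrivial ingredient in the remainder control and should be made explicit; otherwise the exponent bookkeeping you flag as the ``main obstacle'' does not actually close.
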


\begin{proof}
The proof is very similar to the proof of Lemma 9 in \cite{Yan2016asymptotics}.
It only requires verification of the fact that all the steps hold by replacing $d$ with $\tilde{d}$.
\end{proof}

The asymptotic normality of $\tilde{g}-\E g$ is stated in the following proposition, whose proof is in next section.

\begin{proposition}\label{pro:hatg}
Let $\kappa=2(q-1)(-\log \lambda)^{-1}$, where $\lambda=\exp(-\epsilon/2(q-1))$.
(i) If $\kappa_n (\log n)^{1/2} e^{2\|\theta^*\|_\infty}=o(1)$ and  $e^{\|\theta^*\|_\infty}=o( n^{1/2} )$, then for any fixed $k \ge 1$, as
$n\to\infty$, the vector consisting of the first $k$ elements of
$S(\tilde{g} - \E g )$ is asymptotically multivariate normal with mean zero
and covariance matrix given by the upper left $k \times k$ block of $S$.\\
(ii) Let
\[
s_n^2=\mathrm{Var}( \sum_{i=1}^n e_i^+ - \sum_{i=1}^{n-1} e_i^-) = (2n-1)\frac{ 2\lambda}{ (1-\lambda)^2}. 
\]
Assume that  $s_n/v_{2n,2n}^{1/2} \to c$ for some constant $c$. For any fixed $k \ge 1$, the vector consisting of the first $k$ elements of
$S(\tilde{g} - \E g )$ is asymptotically  $k$-dimensional multivariate normal distribution with mean $\mathbf{0}$ 
and covariance matrix
\[
\mathrm{diag}( \frac{1}{v_{1,1}}, \ldots, \frac{1}{v_{k,k}})+ (\frac{1}{v_{2n,2n}} + \frac{s_n^2}{v_{2n,2n}^2}) \mathbf{1}_k \mathbf{1}_k^\top,
\]
where $\mathbf{1}_k$ is a $k$-dimensional column vector with all entries $1$.
\end{proposition}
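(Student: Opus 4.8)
The plan is to exploit the explicit form \eqref{definition:S} of $S$ to rewrite each coordinate $[S(\tilde g-\E g)]_i$ as a sum of independent scalars, and then invoke a Lindeberg--Feller central limit theorem together with the Cram\'er--Wold device. Fix $i\le k$; for $n>k$ these are $\alpha$-coordinates, so $i\le n$. Put $\bar d_i^+=d_i^+-\E d_i^+=\sum_{\ell\ne i}(a_{i,\ell}-\E a_{i,\ell})$, $\bar d_n^-=d_n^--\E d_n^-=\sum_{\ell\ne n}(a_{\ell,n}-\E a_{\ell,n})$ and $T_n=\sum_{j=1}^n e_j^+-\sum_{j=1}^{n-1}e_j^-$. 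Reading off the $i$-th row of $S$ from \eqref{definition:S} and using $\sum_j d_j^+=\sum_j d_j^-$ (so that $\sum_{j=1}^n\bar d_j^+-\sum_{j=1}^{n-1}\bar d_j^-=\bar d_n^-$), one obtains the identity
\[
[S(\tilde g-\E g)]_i=\frac{\bar d_i^++e_i^+}{v_{i,i}}+\frac{\bar d_n^-+T_n}{v_{2n,2n}}.
\]
Here $\bar d_i^+$ and $\bar d_n^-$ are sums of independent centered variables bounded by $q-1$, with $\mathrm{Var}(\bar d_i^+)=v_{i,i}$, $\mathrm{Var}(\bar d_n^-)=v_{2n,2n}$, $\mathrm{Cov}(\bar d_i^+,\bar d_n^-)=\mathrm{Var}(a_{i,n})=O(1)$ and $\mathrm{Cov}(\bar d_i^+,\bar d_j^+)=0$ for $i\ne j$; the noises $\{e_j^+\}$, $\{e_j^-\}$ are mutually independent and independent of the graph, so $\mathrm{Var}(T_n)=s_n^2$.

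Next I would discard the terms that are of smaller order than the standard deviation $v_{i,i}^{-1/2}$ of the leading term $\bar d_i^+/v_{i,i}$. Since $v_{i,i},v_{2n,2n}\ge (n-1)/\{2(1+e^{2\|\theta^*\|_\infty})\}$ and $\max_i|e_i^+|=O_p(\kappa\sqrt{\log n})$ by \eqref{eq:maxe}, the hypotheses — together with the standing assumption $(1+\kappa)^2e^{18\|\theta^*\|_\infty}=o((n/\log n)^{1/2})$ of Theorem \ref{Theorem:binary:central}, which forces $\kappa_n^2e^{2\|\theta^*\|_\infty}\log n=o(n)$ and $e^{\|\theta^*\|_\infty}=o(n^{1/2})$ — give $\max_{i\le k}|e_i^+/v_{i,i}|=o_p(v_{i,i}^{-1/2})$ and make the $\bar d_i^+$--$\bar d_n^-$ cross-covariance term of strictly smaller order than $v_{i,i}^{-1}$ and $v_{2n,2n}^{-1}$; this is why the limiting covariance carries no cross term between the $\bar d_i^+$'s and $\bar d_n^-$. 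In case (i) one additionally uses $\mathrm{Var}(T_n/v_{2n,2n})=s_n^2/v_{2n,2n}^2$ and $\kappa_n(\log n)^{1/2}e^{2\|\theta^*\|_\infty}=o(1)$ (whence $s_n^2/v_{2n,2n}^2=o(v_{2n,2n}^{-1})$) to conclude $T_n/v_{2n,2n}=o_p(v_{2n,2n}^{-1/2})$, leaving the purely-graph expression $\bar d_i^+/v_{i,i}+\bar d_n^-/v_{2n,2n}$; in case (ii), where $s_n/v_{2n,2n}^{1/2}\to c$, the term $T_n/v_{2n,2n}$ is of the same order as $v_{2n,2n}^{-1/2}$ and is retained.

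For the remaining expression I would prove a joint CLT for the independent-summand vector $(\bar d_1^+,\dots,\bar d_k^+,\bar d_n^-,T_n)$ (in case (i) one may simply drop $T_n$ and quote the non-noisy CLT of \cite{Yan2016asymptotics}). Each edge weight enters at most three of these coordinates and is bounded by $q-1$; the noise variables each enter one coordinate, are independent of the graph, and — although $\lambda_n$ varies — form a sum of order $n$ terms with exponentially decaying tails whose total variance $s_n^2$ diverges in case (ii). By Cram\'er--Wold it suffices to check the Lindeberg condition for a fixed combination $\sum_{i=1}^k c_i[S(\tilde g-\E g)]_i$: its graph summands have coefficients $O((1+e^{2\|\theta^*\|_\infty})/n)$, which is $o$ of the total standard deviation (of order $((1+e^{2\|\theta^*\|_\infty})/n)^{1/2}$), so their truncated second moments vanish; its noise part has aggregate variance $s_n^2/v_{2n,2n}^2$, no larger than the total variance, and a one-line tail bound shows its contribution to the Lindeberg sum is at most $\E\{(e^\pm)^2\mathbf{1}\{|e^\pm|>M\}\}/\mathrm{Var}(e^\pm)$ with truncation level $M\asymp v_{2n,2n}^{1/2}\to\infty$, which tends to $0$. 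Hence the vector is asymptotically normal, and using $\mathrm{Var}(\bar d_i^+)=v_{i,i}$, $\mathrm{Var}(\bar d_n^-)=v_{2n,2n}$, $\mathrm{Var}(T_n)=s_n^2$, $\mathrm{Cov}(\bar d_i^+,\bar d_j^+)=0$, the independence of $T_n$ from the graph and the negligibility of the $\bar d_i^+$--$\bar d_n^-$ cross term, the limiting covariance has, to leading order, diagonal $v_{i,i}^{-1}+v_{2n,2n}^{-1}$ and off-diagonal $v_{2n,2n}^{-1}$ in case (i) — exactly the upper-left $k\times k$ block of $S$ — and diagonal $v_{i,i}^{-1}+v_{2n,2n}^{-1}+s_n^2/v_{2n,2n}^2$, off-diagonal $v_{2n,2n}^{-1}+s_n^2/v_{2n,2n}^2$ in case (ii), the claimed matrix; note case (i) is the $c=0$ specialization of case (ii).

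The main difficulty I anticipate is the bookkeeping of scales rather than any single hard estimate: every relevant variance depends on $n$ through both $n$ and $e^{\|\theta^*\|_\infty}$, so ``asymptotically normal with the stated covariance'' must be interpreted as convergence of the vector normalized by the square root of that covariance, and one must verify — uniformly over the $k$ fixed coordinates, and tracking the privacy parameter $\kappa$, which is permitted to grow — that each discarded piece (the Laplace noise entering $v_{i,i}$, the $\bar d_i^+$--$\bar d_n^-$ covariance, and, in case (i), all of $T_n/v_{2n,2n}$) is genuinely $o$ of the retained standard deviation, and that the Lindeberg condition holds for the mixed bounded/sub-exponential sum whose summand scales also vary with $n$.
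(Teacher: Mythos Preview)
Your proposal is correct and is essentially the same approach the paper takes: exploit the explicit row structure of $S$ to write $[S(\tilde g-\E g)]_i=(\bar d_i^++e_i^+)/v_{i,i}+(\bar d_n^-+T_n)/v_{2n,2n}$, discard the Laplace-noise pieces that are $o_p$ of the leading standard deviation (in case (i) this reduces to the non-noisy CLT of \cite{Yan2016asymptotics}, in case (ii) one retains $T_n/v_{2n,2n}$), and finish with Cram\'er--Wold plus Lindeberg--Feller. The paper defers the actual proof of this proposition, but the surrounding arguments (the $\tilde F_{2n}$ decomposition in the proof of Theorem~1 and the explicit use of $S$ and $R$ in the proof of Theorem~2) make clear that this is exactly the intended route; your identification of the scale-tracking for the mixed bounded/sub-exponential Lindeberg sum as the only real bookkeeping burden is accurate.
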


\begin{proof}[Proof of Theorem~2]  
By Lemma \ref{central:lemma2} and noting that $V^{-1}=S+R$, we have
\begin{equation*}
(\widehat{\theta}-\theta)_i= [S(\tilde{g} - \E g) ]_i+ [R \{ \tilde{g} - \E g \}]_i + o_p( n^{-1/2} ).
\end{equation*}
By \eqref{eq:maxe}, $\|\tilde{g} -  g \|_\infty = O_p( \kappa \sqrt{\log n})$.
So by proposition \ref{pro:inverse:appro}, we have
\[
[R ( \tilde{g} - g )]_i = O_p( n \frac{ M^2}{m^3n^2} \kappa \sqrt{\log n} ) = O_p( \frac{ \kappa (\log n)^{1/2}e^{6\| \theta ^*\|_\infty} }{n} ),
\]
where
\[
m=\frac{1}{2(1+e^{2\| \theta ^*\|_\infty})^2}, ~~M=\frac{(q-1)^2}{2}.
\]
If $\kappa e^{6\| \theta^*\|_\infty} = o( (n/\log n)^{1/2})$,  then $[R \{ z - d \}]_i=o_p(n^{-1/2})$.
Combing Lemma \ref{central:lemma1}, it yields
\[
[R ( \tilde{g} - \E g )]_i= [R(\tilde{g} -  g)]_i + [R(g - \E g)]_i = o_p(n^{-1/2}).
\]
Consequently,
\[
(\widehat{\theta}-\theta)_i = [S(\tilde{g} - \E g) ]_i + o_p( n^{-1/2} ).
\]
Theorem 2 
immediately follows from Proposition \ref{pro:hatg}.
\end{proof}

\section*{Acknowledgments}
We are grateful to the two anonymous referees for useful comments and suggestions. Wang's research is partially supported by the National Natural Science Foundation of China (No. 11771171). Luo¡¯s research is partially supported by National Natural Science Foundation of China(No.11801576) and by the Fundamental Research Funds for the Central Universities(South-Central University for Nationalities(CZQ19010)) and by National Statistical Science Research Project of China(No.2019LY59).

\bibliographystyle{apa}
\bibliography{p0reference}

\begin{thebibliography}{}

\bibitem[\protect\astroncite{Amaral et~al.}{2000}]{amaral2000classes}
Amaral, L. A.~N., Scala, A., Barthelemy, M., and Stanley, H.~E. (2000).
\newblock Classes of small-world networks.
\newblock {\em Proceedings of the national academy of sciences},
  97(21):11149--11152.

\bibitem[\protect\astroncite{Chatterjee and
  Diaconis}{2013}]{ChatterjeeDiaconis2013}
Chatterjee, S. and Diaconis, P. (2013).
\newblock Estimating and understanding exponential random graph models.
\newblock {\em Ann. Statist.}, 41(5):2428--2461.

\bibitem[\protect\astroncite{Chatterjee et~al.}{2011}]{chatterjee2011random}
Chatterjee, S., Diaconis, P., Sly, A., et~al. (2011).
\newblock Random graphs with a given degree sequence.
\newblock {\em The Annals of Applied Probability}, 21(4):1400--1435.

\bibitem[\protect\astroncite{Dwork et~al.}{2006}]{dwork2006calibrating}
Dwork, C., McSherry, F., Nissim, K., and Smith, A. (2006).
\newblock Calibrating noise to sensitivity in private data analysis.
\newblock In {\em Theory of cryptography conference}, pages 265--284. Springer.

\bibitem[\protect\astroncite{Frank and Strauss}{1986}]{frank1986markov}
Frank, O. and Strauss, D. (1986).
\newblock Markov graphs.
\newblock {\em Journal of the American Statistical Association},
  81(395):832--842.

\bibitem[\protect\astroncite{Freeman and Freeman}{1979}]{freeman1979networkers}
Freeman, S. and Freeman, L. (1979).
\newblock {\em The networkers network: A study of the impact of a new
  communications medium on sociometric structure}.
\newblock Social Sciences Research Reports No. 46. Irvine,CA: University of
  California.

\bibitem[\protect\astroncite{Han et~al.}{2005}]{Han2005Effect}
Han, J. D.~J., Dupuy, D., Bertin, N., Cusick, M.~E., and Vidal, M. (2005).
\newblock Effect of sampling on topology predictions of protein-protein
  interaction networks.
\newblock {\em Nature Biotechnology}, 23(7):839--44.

\bibitem[\protect\astroncite{Hay et~al.}{2009}]{hay2009accurate}
Hay, M., Li, C., Miklau, G., and Jensen, D. (2009).
\newblock Accurate estimation of the degree distribution of private networks.
\newblock In {\em 2009 Ninth IEEE International Conference on Data Mining},
  pages 169--178. IEEE.

\bibitem[\protect\astroncite{Hay~M}{2010}]{Hay2010book}
Hay~M, Miklau~G, J.~D. (2010).
\newblock Privacy-aware knowledge discovery: Novel applications and new
  techniques.
\newblock {\em Analyzing private network data}, pages 459--498.

\bibitem[\protect\astroncite{Holland and
  Leinhardt}{1981}]{holland1981exponential}
Holland, P.~W. and Leinhardt, S. (1981).
\newblock An exponential family of probability distributions for directed
  graphs.
\newblock {\em Journal of the American Statistical Association},
  76(373):33--50.

\bibitem[\protect\astroncite{Karwa et~al.}{2016}]{karwa2016inference}
Karwa, V., Slavkovi{\'c}, A., et~al. (2016).
\newblock Inference using noisy degrees: Differentially private $\beta$-model
  and synthetic graphs.
\newblock {\em The Annals of Statistics}, 44(1):87--112.

\bibitem[\protect\astroncite{Kasiviswanathan
  et~al.}{2013}]{kasiviswanathan2013analyzing}
Kasiviswanathan, S.~P., Nissim, K., Raskhodnikova, S., and Smith, A. (2013).
\newblock Analyzing graphs with node differential privacy.
\newblock In {\em Theory of Cryptography Conference}, pages 457--476. Springer.

\bibitem[\protect\astroncite{Lu and Miklau}{2014}]{Lu2014}
Lu, W. and Miklau, G. (2014).
\newblock Exponential random graph estimation under differential privacy.
\newblock In {\em Proceedings of the 20th ACM SIGKDD international conference
  on Knowledge discovery and data mining}, pages 921--930. ACM.

\bibitem[\protect\astroncite{Newman}{2001}]{Newman2001}
Newman, M.~E. (2001).
\newblock The structure of scientific collaboration networks.
\newblock {\em Proceedings of the National Academy of Sciences of the United
  States of America}, 98(2):404--409.

\bibitem[\protect\astroncite{Nissim et~al.}{2007}]{nissim2007smooth}
Nissim, K., Raskhodnikova, S., and Smith, A. (2007).
\newblock Smooth sensitivity and sampling in private data analysis.
\newblock In {\em Proceedings of the thirty-ninth annual ACM symposium on
  Theory of computing}, pages 75--84.

\bibitem[\protect\astroncite{Pan and Yan}{2019}]{pan2019asymptotics}
Pan, L. and Yan, T. (2019).
\newblock Asymptotics in the $\beta$-model for networks with a differentially
  private degree sequence.
\newblock {\em Communications in Statistics-Theory and Methods}, 133:1--16.

\bibitem[\protect\astroncite{Task and Clifton}{2012}]{Task2012}
Task, C. and Clifton, C. (2012).
\newblock A guide to differential privacy theory in social network analysis.
\newblock In {\em 2012 IEEE/ACM International Conference on Advances in Social
  Networks Analysis and Mining}, pages 411--417. IEEE.

\bibitem[\protect\astroncite{Vershynin~R}{2012}]{Vershynin2012}
Vershynin~R, Eldar~Y, K.~G. (2012).
\newblock {\em Compressed sensing, theory and applications}.
\newblock Cambridge university press.

\bibitem[\protect\astroncite{Wang and Wong}{1987}]{wang1987stochastic}
Wang, Y.~J. and Wong, G.~Y. (1987).
\newblock Stochastic blockmodels for directed graphs.
\newblock {\em Journal of the American Statistical Association}, 82(397):8--19.

\bibitem[\protect\astroncite{Wasserman and Pattison}{1996}]{Wasserman1996}
Wasserman, S. and Pattison, P. (1996).
\newblock Logit models and logistic regressions for social networks: I. an
  introduction to markov graphs andp.
\newblock {\em Psychometrika}, 61(3):401--425.

\bibitem[\protect\astroncite{Yan}{2020}]{Yan2020}
Yan, T. (2020).
\newblock Directed networks with a differentially private bi-degree sequence.
\newblock {\em Statistica Sinica.DOI:10.5705/ss.202019.0215}.

\bibitem[\protect\astroncite{Yan et~al.}{2016}]{Yan2016asymptotics}
Yan, T., Leng, C., Zhu, J., et~al. (2016).
\newblock Asymptotics in directed exponential random graph models with an
  increasing bi-degree sequence.
\newblock {\em The Annals of Statistics}, 44(1):31--57.

\bibitem[\protect\astroncite{Yan and Xu}{2013}]{yan2013central}
Yan, T. and Xu, J. (2013).
\newblock A central limit theorem in the $\beta$-model for undirected random
  graphs with a diverging number of vertices.
\newblock {\em Biometrika}, 100(2):519--524.

\bibitem[\protect\astroncite{Zhang et~al.}{2016}]{Zhang2016Directed}
Zhang, Y., Chen, S., Hong, Q., and Yan, T. (2016).
\newblock Directed weighted random graphs with an increasing bi-degree
  sequence.
\newblock {\em Statistics $\and$ Probability Letters}, 119:235--240.

\end{thebibliography}

\end{document}